 \newtheorem{theorem}{Theorem}[section]
  \newtheorem{proposition}[theorem]{Proposition}
  \newtheorem{corollary}[theorem]{Corollary}
  \newtheorem{lemma}[theorem]{Lemma}
  \newtheorem{introthm}{Theorem}
  \newtheorem{introcor}[introthm]{Corollary}
  \theoremstyle{definition}
  \newtheorem{definition}[theorem]{Definition}
  \newtheorem*{claim*}{Claim}
  \newtheorem*{question*}{Question}
  \newtheorem*{answer*}{Answer}
  \newtheorem*{application*}{Application}
  \theoremstyle{remark}
  \newtheorem{remark}[theorem]{Remark}
  \newtheorem*{remark*}{Remark}
\DeclarePairedDelimiterX{\Norm}[1]{\lVert}{\rVert}{#1}
 \newcommand{\from}{\colon\thinspace} 
\theoremstyle{definition}
  \newcommand{\sC}{{\sf C}}   
  \newcommand{\sD}{{\sf D}}
  \newcommand{\sK}{{\sf K}}
  \newcommand{\sN}{{\sf N}}
  \newcommand{\sQ}{{\sf Q}}   
  \newcommand{\sR}{{\sf R}}
  \renewcommand{\aa}{{\sf a}}   
  \newcommand{\bb}{{\sf b}}   
  \newcommand{\ub}{{\underline{b}}}
  \newcommand{\cc}{{\sf c}}
    \newcommand{\E}{{\mathbb{e}}}
  \newcommand{\kk}{{\sf k}}   
  \newcommand{\mm}{{\sf m}}   
  \newcommand{\nn}{{\sf n}}
  \newcommand{\qq}{{\sf q}}   
  \newcommand{\rr}{{\sf r}}  
    \newcommand{\RR}{{\mathbb{R}}}
    \newcommand{\bfa}{{\textbf{a}}} 
  \newcommand{\bfb}{{\textbf{b}}}
  \newcommand{\gothic}{\mathfrak}
  \newcommand{\go}{{\gothic o}}
  \newcommand{\calC}{\mathcal{C}}
  \newcommand{\calN}{\mathcal{N}}
  \newcommand{\calU}{\mathcal{U}}
\newcommand{\CAT}{\ensuremath{\operatorname{CAT}(0)}\,}
\newcommand{\ou}{\pm}
  \newcommand{\ST}{\mathbin{\Big|}} 
\newcommand{\pka}{\partial_{\kappa}} 
\begin{document}

\title[Geometry and Dynamics on $\kappa$-Morse boundaries of \CAT groups]{Geometry and Dynamics on sublinearly Morse boundaries of \CAT groups}


 \author   {Yulan Qing}
 \address{Department of Mathematics, Fudan University,  Shanghai, China }
 \email{yulan.qing@gmail.com}
 
  \author   {Abdul Zalloum}
 \address{Department of Mathematics, Queen's University, Kingston, ON }
 \email{az32@queensu.ca}

 


\begin{abstract}
    Given a sublinear function $\kappa$, $\kappa$-Morse boundaries $\pka X$ of proper \CAT spaces are  introduced by Qing, Rafi and Tiozzo. It is a topological space that consists of a large set of quasi-geodesic rays and it is quasi-isometrically invariant and metrizable.
     In this paper, we study the sublinearly Morse boundaries with the assumption that there is a proper cocompact action of a group $G$ on the \CAT space in question. We show that $G$ acts minimally on $\pka G$ and that contracting elements of $G$ induces a weak north-south dynamic on $\pka G$. Furthermore, we show that a homeomorphism $f \from \pka G \to \pka G'$ comes from a quasi-isometry if and only if $f$ is successively quasi-m{\"o}bius and stable. Lastly, we characterize exactly when the sublinearly Morse boundary  of a \CAT space is compact.
\end{abstract}

\maketitle

\section{Introduction}
Much of the geometric group theory originates from the studying of hyperbolic groups and hyperbolic spaces. Hyperbolic groups have solvable word problem and strong dynamical properties. One fundamental technique in the study of hyperbolic groups is to study the Gromov boundaries of these groups. Gromov took the collection of all infinite geodesic rays (up to fellow traveling) in the associated Cayley graph, equipped this set with cone topology, and defined the space to be the boundary $\partial G$ of the hyperbolic group $G$. The boundary $\partial G$ is independent of the choice of a generating set and has rich geometric, topological, and algebraic structures (see for example the survey by Kapovich and Benakli \cite{Kapovich}).

If we view Gromov hyperbolic spaces as coarsely negatively curved, then the notion of \CAT  include spaces with both local and global non-positive curvature. Accordingly the extension of the boundary theory to \CAT spaces and groups has also been developing in recent decades. In this setting, the space of all geodesic rays together with the cone topology is called the visual boundary  (denoted by $\partial_{v} X$). It is shown by Croke and Kleiner  that the visual boundary of a \CAT space is not in general a quasi-isometry invariant \cite{Kleiner}. In \cite{Qin16}, it is shown that, in the Croke-Kleiner example, failure to obtain quasi-isometry invariance can come from  geodesic rays that spend linear amount of time (with respect to total time travelled) in each product region. 

Hence, one can consider geodesic rays that spend a sublinear amount of time in each product region. In \cite{QR19}, Qing and Rafi introduce the sublinearly Morse boundary $\pka X$ of a \CAT metric space $X$ and show that $\pka X$ is quasi-isometry invariant and metrizable. In \cite{QR19}, Qing and Tiozzo show that, for a right-angled Artin group $G$, $\pka G$ is a model for Poisson boundaries associated to a random walk $(G, \mu)$.  Intuitively, a (quasi-)geodesic ray is sublinearly Morse if it spends a sublinear amount of time in each maximal product region, with respect to total time travelled when it enters that product region. Furthermore, in \cite{GQR22}, it is shown that for every \CAT group with a rank one element, there exists a $\kappa$ such that $\pka G$ can be identified with the Poisson boundaries of the group. The authors of \cite{GQR22} also show that the sublinearly Morse directions in the visual boundary of a rank 1 \CAT space with a geometric group action are generic with respect to Patterson-Sullivan measures. Most recently, it is shown that much like the Gromov boundaries,  sublinearly Morse boundaries are sublinearly biLipschitz equivalence invariant \cite{QP}, providing a new way to tell when two groups are not sublinearly biLipschitz equivalence. These are evidences that the sublinearly Morse directions behalf similar to directions in hyperbolic spaces. In this paper, we continue to contribute to this comparison and focus on the dynamical property of the group action on $\kappa$-Morse boundaries. Much of the work in this paper is inspired by the methods in \cite{ChSu2014} and \cite{CashenMackay}. In more general proper geodesic spaces, sublinearly Morse boundaries have been developed and studied in \cite{QRT22, MQZ22, QN, IZ20, DZ22}, for instance.
  
\subsubsection*{Minimality of the group action}  A group is said to act \emph{minimally} on a topological space if every orbit is a dense subset of the space. We show that this property is enjoyed by the $\kappa$ boundaries. In contrast with the identifications with Poisson boundaries in various settings, the minimality result evidence the fact that the boundary is not too large in excess of the orbit of a point under the group action.


\begin{introthm}(Theorem~\ref{minimality theorem})\label{miminal-action-intro}
Suppose $G$ is a group that acts geometrically on a \CAT space $X$.  Then $G$ acts minimally on $\pka G$.
 \end{introthm}
 
 Based on this result, we illustrate that for a subset of the group elements, their actions induces the following form of north-south dynamics on the boundary:
 
\begin{introthm}(Theorem~\ref{NSdym})\label{NS-intro}
Suppose $G$ is a group that acts geometrically on a \CAT space $X$. Let $g \in G$ be a contracting element. For every open set $V$ containing $g^{\infty}$ and every compact set $C \in (\pka G \setminus [g^{\infty}])$, there exists an $N$ such that 
for all $n \geq N$, we have $g^nC \subset V$.
\end{introthm}

\subsubsection*{Compact type $\kappa$-boundaries.} In the examples shown in \cite{QR19}, the boundaries are not compact. We show that when $X$ is a proper hyperbolic space, $\kappa$-boundary is homeomorphic to the associated Gromov boundary. In fact, we show that this is exactly when a cocompact \CAT space $X$ has compact sublinear boundaries. On the other hand, examples of \CAT space without a cocompact group action, whose sublinearly boundaries are compact can be constructed easily. However it remains open to find a \CAT space $X$ with non-compact sublinear boundary where $\pka X$ is a perfect space.
When $X$ is a hyperbolic CAT(0) space, then the $\kappa$-boundary agrees with the Gromov boundary.
\begin{introthm} \label{compactness implies hyperbolicity intro}(Theorem~\ref{main theorem of the section})
Suppose a group $G$ acts geometrically on a proper \CAT space $X$ such that $\partial_{\kappa}X \neq \emptyset$, then the following are equivalent: 
\begin{enumerate}
    \item Every geodesic ray in $X$ is $\kappa$-contracting.
    \item Every geodesic ray in $X$ is strongly contracting.
    \item $\partial_{\kappa} X$ is compact.
    \item The space $X$ is hyperbolic.
\end{enumerate}

\end{introthm}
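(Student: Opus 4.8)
The plan is to prove the cycle $(2)\Rightarrow(1)\Rightarrow(4)\Rightarrow(2)$ together with $(1)\Rightarrow(3)$ and $(3)\Rightarrow(1)$, which establishes all four equivalences. The implication $(2)\Rightarrow(1)$ is immediate from the definitions: a strongly contracting ray has disjoint balls projecting to sets of diameter at most a fixed constant $\sD$, and since the sublinear function $\kappa$ is bounded below by a positive constant, a uniform bound $\sD$ on these diameters is \emph{a fortiori} a bound of the form required for $\kappa$-contraction. For $(4)\Rightarrow(2)$ I would invoke standard hyperbolic geometry: since $X$ is hyperbolic it is $\delta$-hyperbolic for some $\delta$, and in a $\delta$-hyperbolic geodesic space the nearest-point projection onto a geodesic carries every disjoint ball to a set of uniformly bounded diameter; hence every geodesic is strongly contracting with constant depending only on $\delta$, giving $(2)$.

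The first genuinely geometric step is $(1)\Rightarrow(4)$, which I would carry out through the Flat Plane Theorem of Bridson--Haefliger: since $G$ acts geometrically (properly and cocompactly) on the proper CAT(0) space $X$, the space $X$ is hyperbolic if and only if it contains no isometrically embedded flat plane $\mathbb{E}^2$. If such a flat plane $E$ existed, then a geodesic ray $\rho \subseteq E$ would fail to be $\kappa$-contracting for \emph{every} sublinear $\kappa$: a ball of radius $r$ tangent to $\rho$ inside $E$ projects onto a segment whose diameter is comparable to $r = d(\text{center},\rho)$, which grows linearly and thus cannot be bounded by $c\,\kappa(r)$ as $\kappa(r)/r \to 0$. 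This contradicts $(1)$, so $X$ has no flat plane and is therefore hyperbolic.

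It remains to incorporate $(3)$. For $(1)\Rightarrow(3)$, hypothesis $(1)$ says every geodesic ray is $\kappa$-contracting, hence $\kappa$-Morse, so $\partial_{\kappa}X$ and $\partial_{v}X$ coincide as sets; invoking the comparison of topologies from \cite{QR19} — the inclusion $\partial_{\kappa}X \hookrightarrow \partial_{v}X$ is a topological embedding carrying the subspace topology of the visual boundary — we obtain $\partial_{\kappa}X \cong \partial_{v}X$, which is compact because $X$ is proper. The reverse direction $(3)\Rightarrow(1)$ is the crux and the step I expect to be the main obstacle. Here the plan is: by the same embedding, compactness of $\partial_{\kappa}X$ forces its image to be a closed subset of the compact Hausdorff space $\partial_{v}X$. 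On the other hand, by Theorem~\ref{main theorem in the intro} the group $G$ contains a rank one isometry, and for a geometric action possessing a rank one isometry the endpoints of rank one axes are dense in $\partial_{v}X$ (Ballmann); since every such endpoint is strongly contracting, hence $\kappa$-Morse, the subspace $\partial_{\kappa}X$ is \emph{dense} in $\partial_{v}X$. A subset that is simultaneously closed and dense must be the whole space, so $\partial_{\kappa}X = \partial_{v}X$, i.e.\ every geodesic ray is $\kappa$-Morse and hence $\kappa$-contracting, which is exactly $(1)$.

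The main obstacle, as indicated, is the density of $\kappa$-Morse directions in the full visual boundary used in $(3)\Rightarrow(1)$: although Theorem~\ref{main theorem in the intro} gives density of strongly contracting rays \emph{inside} $\partial_{\kappa}X$, upgrading this to density inside $\partial_{v}X$ requires the dynamical fact that rank one axis endpoints accumulate everywhere in $\partial_{v}X$, together with a careful check that the sublinearly Morse topology restricts to the visual subspace topology, so that ``compact'' genuinely means ``closed in $\partial_{v}X$.'' Should the embedding statement not be available in the needed generality, an alternative route for $(3)\Rightarrow(1)$ is to stratify $\partial_{\kappa}X$ by Morse gauge as an increasing union of closed (hence compact) pieces, apply the Baire category theorem to the compact space $\partial_{\kappa}X$ to locate a gauge-stratum with nonempty interior, and then use cocompactness together with the density of orbits in Theorem~\ref{main theorem in the intro}(3) to cover $\partial_{\kappa}X$ by finitely many $G$-translates of that stratum, yielding a uniform contraction constant and hence hyperbolicity directly.
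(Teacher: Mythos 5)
Your cycle $(2)\Rightarrow(1)\Rightarrow(4)\Rightarrow(2)$ matches the paper: $(2)\Rightarrow(1)$ is immediate since $\kappa\geq 1$, $(1)\Rightarrow(4)$ is exactly the paper's Flat Plane Theorem argument, and $(4)\Rightarrow(2)$ is the paper's appeal to uniform Morseness of geodesics in hyperbolic spaces. The genuine gap is in how you bring $(3)$ into the picture. For $(1)\Rightarrow(3)$ you invoke ``the inclusion $\partial_{\kappa}X\hookrightarrow\partial_{v}X$ is a topological embedding carrying the subspace topology of the visual boundary.'' No such statement is available, and it is false in general: the paper only proves (Lemma~\ref{key to quasi-mobius maps}) that the visual and $\kappa$-topologies agree on the strata $\partial^{\sD}_{v}X$ of \emph{uniformly} $\sD$-strongly contracting rays. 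Hypothesis $(1)$ gives each ray its own contraction constant $\cc$, with no uniform bound, and without uniformity the identity map from the visual topology to the $\kappa$-topology need not be continuous (this is precisely why $\pka X$ fails to be compact for $\mathbb{Z}^2\ast\mathbb{Z}$ even though the Morse directions inject into the compact visual boundary). The repair is to route $(1)\Rightarrow(4)\Rightarrow(3)$, as the paper does: once $X$ is hyperbolic, every ray is $\sD$-strongly contracting for a single $\sD$, so $\partial^{\sD}_{v}X=\partial_{v}X$ and $\partial^{\sD}_{\kappa}X=\pka X$, and Lemma~\ref{key to quasi-mobius maps} identifies $\pka X$ with the compact visual boundary.

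Your $(3)\Rightarrow(1)$ is a genuinely different argument from the paper's and rests on two further unverified ingredients: continuity of the natural injection $\pka X\to\partial_{v}X$ (needed so that compactness gives a closed image; plausible via convexity, but not established here or in \cite{QR19}, and delicate because the neighborhoods $\calU_{\kappa}(\bfa,\rr)$ only constrain quasi-geodesics whose Morse gauge is small compared to $\rr$), and Ballmann's theorem that endpoints of rank one axes are dense in $\partial_{v}X$ for a geometric action containing a rank one element. The paper avoids both: it takes an arbitrary geodesic ray $\alpha_0$, uses cocompactness to place translates $g_i\alpha_g$ of a rank one axis at points $g_i\go$ within $\sC$ of $\alpha_0(i)$, forms the $(2,0)$-quasi-geodesic concatenations $y_i=[\go,g_i\go]\cup[g_i\go,\alpha_g(\pm\infty))$, uses compactness of $\pka X$ to extract a subsequence with $[y_i]\to\aa$, and concludes that $\alpha_0\subset\calN_{\kappa}\big(a_0,\sC+\mm_{a_0}(2,0)\big)$, whence $\alpha_0=a_0$ by the uniqueness of geodesics in $\kappa$-neighborhoods (Lemma~\ref{uniquegeodesic}). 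That argument stays entirely inside the $\kappa$-topology and needs no comparison with $\partial_{v}X$. If you want to keep your topological route you must actually prove the continuity of the injection and cite the density theorem precisely; otherwise the paper's sequential argument is the cleaner path, and your Baire-category fallback, while closer in spirit to a workable uniformization, is not carried out.
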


\begin{introcor}(Theorem \ref{main theorem of the section})
If $X$ is a proper CAT(0) hyperbolic space then $\pka X \simeq \partial X$. 
\end{introcor}

\subsubsection*{Rigidity} In 1996, Paulin gives the following characterization \cite{Paulin1996}: if $f : \partial X \to \partial Y$ is a homeomorphism between the boundaries of two proper, cocompact hyperbolic spaces, then the following are equivalent
\begin{enumerate}
\item $f$ is induced by a quasi-isometry $h: X \rightarrow Y$.
\item $f$ is quasi-m{\"o}bius.
\end{enumerate}

Quasi-m{\"o}bius maps are maps such that changes in the cross ratio are controlled by a continuous function. We aim to give a similar characterization for sublinearly Morse boundaries. We use the notion of \emph{successively quasi-m{\"o}bius} discussed it \cite{QR19}, which is a 1-parameter family of quasi-m{\"o}bius  maps on $\pka X$.

\begin{introthm}\label{introquasimobius} (Theorem \ref{quasimobius})
Let $X,Y$ be proper cocompact \CAT spaces with at least 3 points in their sublinear boundaries. A homeomorphism $f:\partial_\kappa X \to \partial_\kappa Y$ is induced by a quasi-isometry $h:X \to Y$ if and only if $f$ is stable and successively quasi-m{\"o}bius.
\end{introthm}

\subsection*{Acknowledgement} We thank Ruth Charney, Mathew Cordes, Curt Kent and Kasra Rafi for helpful conversations.

\section{Preliminaries}

\subsection{Quasi-isometry and quasi-isometric embeddings}

\begin{definition}[Quasi Isometric embedding] \label{Def:Quasi-Isometry} 
Let $(X , d_X)$ and $(Y , d_Y)$ be metric spaces. For constants $\kk \geq 1$ and
$\sK \geq 0$, we say a map $f \from X \to Y$ is a 
$(\kk, \sK)$--\textit{quasi-isometric embedding} if, for all points $x_1, x_2 \in X$
$$
\frac{1}{\kk} d_X (x_1, x_2) - \sK  \leq d_Y \big(f (x_1), f (x_2)\big) 
   \leq \kk \, d_X (x_1, x_2) + \sK.
$$
If, in addition, every point in $Y$ lies in the $\sK$--neighbourhood of the image of 
$f$, then $f$ is called a $(\kk, \sK)$--quasi-isometry. When such a map exists, $X$ 
and $Y$ are said to be \textit{quasi-isometric}. 

A quasi-isometric embedding $f^{-1} \from Y \to X$ is called a \emph{quasi-inverse} of 
$f$ if for every $x \in X$, $d_X(x, f^{-1}f(x))$ is uniformly bounded above. 
In fact, after replacing $\kk$ and $\sK$ with larger constants, we assume that 
$f^{-1}$ is also a $(\kk, \sK)$--quasi-isometric embedding, 
\[
\forall x \in X \quad d_X\big(x, f^{-1}f(x)\big) \leq \sK \qquad\text{and}\qquad
\forall y \in Y \quad d_Y\big(y, f\,f^{-1}(x)\big) \leq \sK.
\]
\end{definition}

A \emph{geodesic ray} in $X$ is an isometric embedding $\beta \from [0, \infty) \to X$. We fix a base-point $\go \in X$ and always assume 
that $\beta(0) = \go$, that is, a geodesic ray is always assumed to start from 
this fixed base-point. 
\begin{definition}[Quasi-geodesics] \label{Def:Quadi-Geodesic} 
In this paper, a \emph{quasi-geodesic ray} is a continuous quasi-isometric 
embedding $\beta \from [0, \infty) \to X$  starting from the basepoint $\go$. 
\end{definition}
The additional assumption that quasi-geodesics are continuous is not necessary for the results in this paper to hold,
but it is added for convenience and to make the exposition simpler. 

If $\beta \from [0,\infty) \to X$ is a $(\qq, \sQ)$--quasi-isometric embedding, 
and $f \from X \to Y$ is a $(\kk, \sK)$--quasi-isometry then the composition 
$f \circ \beta \from [t_{1}, t_{2}] \to Y$ is a quasi-isometric embedding, but it may 
not be continuous. However, one can adjust the map slightly to make it continuous 
(see Definition 2.2  \cite{QR19}) such that $f \circ \beta$ is a $(\kk\qq, 2(\kk\qq + \kk \sQ + \sK))$--quasi-geodesic ray.  

Similar to above, a \emph{geodesic segment} is an isometric embedding 
$\beta \from [t_{1}, t_{2}] \to X$ and a \emph{quasi-geodesic segment} is a continuous 
quasi-isometric embedding \[\beta \from [t_{1}, t_{2}] \to X.\] 

\noindent \textbf{Notation}. In this paper we will use $\alpha, \beta...$ to denote quasi-geodesic rays. If the quasi-geodesic constants are $(1, 0)$, we use $\alpha_{0}, \beta_{0},...$ to signify that they are in fact geodesic rays.  Meanwhile, we use $[\alpha], [\beta],...$ to denote equivalence classes of quasi-geodesic rays, and we also use $\bfa, \bfb,...$ to denote equivalence classes without referring an element in each class. Furthermore, let $\alpha$ be a (quasi-)geodesic ray $\alpha \from [0, \infty) \to X$, if $x_{1}, x_{2}$ are points on $\alpha$, then the segment of $\alpha$ between $x_{1}$ and $x_{2}$
is denoted $[x_{1}, x_{2}]_{\alpha}$. If a segment is presented without subscript, for example $[y_{1}, y_{2}]$, then it is a geodesic segment between the two points.
Let $\beta$ be a quasi-geodesic ray. For $\rr>0$, let $t_\rr$ be the first time where $\Norm{\beta(t)}=\rr$ and define:
\begin{equation}\label{notation}
\beta_\rr := \beta(t_\rr)
\qquad\text{and}\qquad
\beta|_{\rr} : = \beta{[0,t_\rr]} = [\beta(0),  \beta_{\rr}]_{\beta}
\end{equation}
which are points and segments in $X$, respectively. 
\subsection{Properties of \CAT spaces}
A geodesic metric space $(X, d_X)$ is \CAT if geodesic triangles in $X$ are at 
least as thin as triangles in Euclidean space with the same triple of side-lengths. To be precise, for any 
given geodesic triangle $\triangle pqr$, consider the unique triangle 
$\triangle \overline p \overline q \overline r$ in the Euclidean plane with the same triple of side-lengths. The triangle $\triangle pqr$ is at least at thin as $\triangle \overline p \overline q \overline r$  in the following
sense: For any pair of points $x, y$ on the triangle $\triangle pqr$, without loss of generality let $x, y$ be on edges $[p,q]$ and $[p, r]$, if we choose points $\overline x$ and $\overline y$  on 
edges $[\overline p, \overline q]$ and $[\overline p, \overline r]$ of 
the triangle $\triangle \overline p \overline q \overline r$ so that 
$d_X(p,x) = d_\E(\overline p, \overline x)$ and 
$d_X(p,y) = d_\E(\overline p, \overline y)$ then,
\[ 
d_{X} (x, y) \leq d_{\mathbb{E}^{2}}(\overline x, \overline y).
\] 


%
%
%
%
%
%
%
%
%
%
%
%
%
%
%
%
%
%
%
%
%
%
%
%
%
%
%
%
%
%
%
%
%
%
%
%
%
%
%
%
%
%
%
%
%
%
%
%
%
%
%
%
%
%
%
%
%
%
%
%
%
%
%
%
%
%
%
%
%
%
%
%
%
%
%
%
%
%
%
%
%
%
A metric space $X$ is {\it proper} if closed metric balls are compact. For the remainder of the paper, we assume $X$ is a proper \CAT space;  a proper \CAT space has the following basic properties that are needed in this paper:
\begin{lemma}
 A proper \CAT space $X$ has the following properties:
\begin{enumerate}
\item For any two points $x, y$ in $X$, 
there exists exactly one geodesic connecting them. Consequently, $X$ is contractible 
via geodesic retraction to a base point in the space. 
\item The nearest point projection from a point $x$ to a geodesic line $\beta_{0}$ 
is a unique point denoted $\pi_{\beta_{0}}(x)$, or simply $x_{\beta_{0}}$. In fact, the closest point projection map to a geodesic 
\[
\pi_{\beta_{0}} \from X \to \beta_{0}
\]
is Lipschitz with respect to distances. The nearest point projection from a point $x$ to a quasi-geodesic line $\beta$ exists and is not necessarily unique. We denote the whole projection set $\pi_{\beta}(x)$.
\item For any $x \in X$, the distance function $d_{X}(x,\cdot)$ is convex. In other words, for any given any geodesic $[x_0,x_1]$ and  $t \in [0,1]$, if $x_t $  satisfies $d_{X}(x_0,x_t)=td(x_0,x_1)$ then we must have 
\[d_{X}(x,x_t) \leq (1-t)d_{X}(x,x_0)+td_{X}(x,x_1).\]
\end{enumerate}
\end{lemma}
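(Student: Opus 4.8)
The plan is to derive all three statements directly from the CAT(0) comparison inequality recorded above, together with the completeness and properness of $X$. I would treat the items in order, since the later parts reuse tools established in the earlier ones.

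For item (1), to prove uniqueness of geodesics I would take two geodesics $\gamma, \gamma'$ from $x$ to $y$ and form the \emph{degenerate} comparison triangle with side-lengths $d_X(x,y), d_X(x,y), 0$, which is a single Euclidean segment. For a point $\gamma(t)$ and the point $\gamma'(t)$ at equal arc length from $x$, their comparison points coincide in this degenerate triangle, so the CAT(0) inequality forces $d_X(\gamma(t),\gamma'(t)) \le 0$; hence $\gamma = \gamma'$. For contractibility I would fix the base point $\go$ and define $H \from X \times [0,1] \to X$ sending $(x,s)$ to the point on the unique geodesic $[\go, x]$ at proportional parameter $s$; then $H(\cdot,1) = \id$ and $H(\cdot,0) \equiv \go$, and continuity of $H$ follows from the convexity of the CAT(0) metric (the two-variable refinement of item (3), giving continuous dependence of geodesics on their endpoints).

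For item (2), existence of a nearest point on the geodesic line $\beta_0$ follows from properness: $z \mapsto d_X(x,z)$ is continuous and proper along $\beta_0$, so any minimizing sequence stays in a compact ball and a minimizer exists. Uniqueness I would obtain from the $\mathrm{CN}$ (semiparallelogram) inequality: if $p_1, p_2 \in \beta_0$ both realize $r = d_X(x,\beta_0)$, then their midpoint $m$ lies on $\beta_0$ by uniqueness of geodesics from item (1), and satisfies $d_X(x,m)^2 \le r^2 - \tfrac14 d_X(p_1,p_2)^2$, which is strictly less than $r^2$ unless $p_1 = p_2$. For the nonexpanding (hence Lipschitz) property of $\pi_{\beta_0}$ I would use the standard angle estimate that the Alexandrov angle at $\pi_{\beta_0}(x)$ between the geodesic $[\,x,\pi_{\beta_0}(x)\,]$ and $\beta_0$ is at least $\pi/2$; comparing the geodesic quadrilateral with vertices $x, \pi_{\beta_0}(x), \pi_{\beta_0}(y), y$ against its Euclidean model then yields $d_X(\pi_{\beta_0}(x),\pi_{\beta_0}(y)) \le d_X(x,y)$. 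For a quasi-geodesic $\beta$, properness again gives existence of a nearest point, but convexity fails, so I would simply record the projection set $\pi_{\beta}(x)$ without a uniqueness claim.

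For item (3), I would take the triangle $\triangle x x_0 x_1$ together with its Euclidean comparison triangle $\triangle \overline x\, \overline x_0\, \overline x_1$. In the Euclidean plane the point $\overline x_t = (1-t)\overline x_0 + t\,\overline x_1$ satisfies $d_\E(\overline x, \overline x_t) \le (1-t)\,d_\E(\overline x, \overline x_0) + t\, d_\E(\overline x, \overline x_1)$ by the triangle inequality applied to this convex combination. Since the comparison preserves the two relevant side lengths and the CAT(0) inequality gives $d_X(x, x_t) \le d_\E(\overline x, \overline x_t)$, substituting yields the claimed bound. The routine steps are the comparison computations in (1) and (3), which are essentially immediate. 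The main obstacle is the nonexpanding property of the projection in (2): unlike the rest, it is not a one-line comparison but needs the angle-at-the-projection estimate together with a four-point quadrilateral comparison, and here I would lean on the standard CAT(0) convex-projection lemma (e.g.\ Bridson--Haefliger II.2.4) rather than reprove it from scratch.
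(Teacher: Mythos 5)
The paper states this lemma as background and gives no proof of its own (these are the standard facts from Bridson--Haefliger, Chapter II), so there is no argument in the paper to compare against; judged on its own terms, your proposal is correct. The degenerate comparison triangle for uniqueness of geodesics, the CN/semiparallelogram inequality for uniqueness of the projection, the angle-at-the-foot plus quadrilateral comparison for the nonexpanding (hence $1$-Lipschitz) property, and the comparison-triangle computation for convexity of $d_X(x,\cdot)$ are exactly the standard derivations, and deferring the projection lemma to Bridson--Haefliger II.2.4 is legitimate since a geodesic line is a complete convex subset. Two minor remarks: existence of the connecting geodesic in item (1) is not something you need to prove, since the paper's definition of CAT(0) already assumes $X$ is a geodesic space; and for the quasi-geodesic case in item (2) you should note that the image of a continuous quasi-isometric embedding of $\RR$ is closed (the map is metrically proper), which is what lets the properness argument produce a minimizer.
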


In addition, we need the following redirecting surgery for all proper metric spaces:
%

\begin{lemma}\label{Lem:surgery}
Let  $X$ be a proper, complete metric space. Let $b$ be a geodesic ray and $\gamma$ be a $(\qq, \sQ)$--quasi-geodesic ray. 
For $\rr>0$, assume that $d_X(b_\rr, \gamma)\leq \rr/2$. Then, there exists a
$(9\qq,\sQ)$--quasi-geodesic $\gamma'$ so that 
\[
\gamma' \in [b], \qquad\text{and}\qquad \gamma|_{\rr/2} = \gamma'|_{\rr/2}. 
\]
\end{lemma}

%

\subsection{Boundaries of \CAT space}
\subsubsection{Visual boundary}

In this section we review a couple of topological boundaries of \CAT spaces that are important to the study of this paper.

 \begin{definition}[visual boundary]
Let $X$ be a \CAT space. The \emph{visual boundary} of $X$, denoted $\partial_{v} X$,  is the collection of equivalence classes of infinite geodesic rays, where $\alpha$ and $\beta$ are in the same equivalence class, if and only if there exists some $C \geq 0$ such that $d(\alpha(t), \beta(t)) \leq C$ for all $t \in [0 ,\infty).$ The equivalence class of $\alpha$ in $\partial_v X$ we denote $\alpha(\infty)$.
 \end{definition}
  Notice that by Proposition I. 8.2 in \cite{BH1}, for each $\alpha$ representing an element of $\partial X$, and for each $x' \in X$, there is a unique geodesic ray $\alpha'$ starting at $x'$ with $\alpha(\infty)=\alpha'(\infty).$

We describe the topology of the visual boundary by a neighbourhood basis: fix a base point $\go$ and let $\alpha$ be a geodesic ray starting at $\go$. A neighborhood basis for $\alpha$ is given by sets of the form: 
\[\calU_{v}\big(\alpha(\infty), r, \epsilon):=\{ \beta(\infty) \in \partial_{v} X | \, \beta(0)=\go\,\,\text{and }\,d(\alpha(t), \beta(t))<\epsilon \, \, \text{for all }\, t < r \}.\]

 In other words, two geodesic rays are close if they have
geodesic representatives that start at the same point and stay close (are at most $\epsilon$ apart) for a long
time (at least $r$). Notice that the above definition of the topology on $\partial_{v} X$ references a base-point $\go$. Nonetheless, Proposition I. 8.8 in \cite{BH1} proves that the topology of the visual boundary is base-point invariant. 

\begin{definition}(Visibility) A point $\zeta$ in the visual boundary is said to be a \emph{visibility point} if any other point $\zeta' \in \partial \partial X,$ there exists a geodesic line $l$ with $l(\infty)=\zeta$ and $l(-\infty)=\zeta'.$ A subset $Y \subseteq \partial X$ is said to be a \emph{visibility space} if for any $\zeta,\zeta' \in Y$ with $\zeta \neq \zeta',$ there is a geodesic line $l$ with $l(\infty)=\zeta$ and $l(-\infty)=\zeta'.$
 
\end{definition}

Related to the above, in \cite{Zalloum_2022}, it's shown that each point of the sublinearly Morse boundary $\partial_\kappa X$ is a visibility point of $\partial X$.

\begin{figure}[h]
\begin{center}
\begin{tikzpicture}[scale=0.6]

\node (x) [circle,fill,inner sep=1pt,label=180:$\go$] at (0,0) {};
\draw [name path=circle] (0,0) circle (3);

\draw [name path=line,thin] (0,0) to [bend right=20] (5,2.5);
\draw [thin] (0,0) to [bend left=20] (5,-2.5);
\draw (0,0) to (5,0) node [right] {$\alpha$};

\draw [very thin,
	decorate,
	decoration={brace,mirror,amplitude=12pt}] 
	(0,0) -- (3,0) node [midway,yshift=-20pt] {$r$};

\draw [very thin,
	name intersections={of=line and circle},
	decorate,
	decoration={brace,amplitude=4pt}] 
	(intersection-1) -- (3,0) node [midway,xshift=12pt] {$\epsilon$};

\end{tikzpicture}
\end{center}
\caption{A basis for open sets}
\label{}
\end{figure}

\subsubsection{Sublinearly Morse boundaries} \hfill

Let $\kappa \from [0, \infty) \to [1, \infty)$ be a sublinear function that is monotone increasing and concave. That is
\[
\lim_{t \to \infty} \frac{\kappa(t)}{t} = 0. \label{subfunction}
\]

The assumption that $\kappa$ is increasing and concave makes certain arguments
cleaner, otherwise they are not really needed. One can always replace any 
sub-linear function $\kappa$, with another sub-linear function $\overline \kappa$
so that \[\kappa(t) \leq \overline \kappa(t) \leq \sC \, \kappa(t)\] for some constant $\sC$ 
and $\overline \kappa$ is monotone increasing and concave. For example, define 
\[
\overline \kappa(t) = \sup \Big\{ \lambda \kappa(u) + (1-\lambda) \kappa(v) \ST 
\ 0 \leq \lambda \leq 1, \ u,v>0, \ \text{and}\ \lambda u + (1-\lambda)v =t \Big\}.
\]
The requirement $\kappa(t) \geq 1$ is there to remove additive errors in the definition
of $\kappa$--contracting geodesics(See Definition~\ref{Def:Contracting}). 

%
\subsubsection{$\kappa$--Morse geodesic rays}
The boundary of interest in this paper consists of points in $\partial X$ that are in the ``hyperbolic-like''. In proper \CAT spaces, they can be characterized in two equivalence ways.

\begin{definition}[$\kappa$--neighborhood]  \label{Def:Neighborhood} 
For a closed set $Z$ and a constant $\nn$ define the $(\kappa, \nn)$--neighbourhood 
of $Z$ to be 
\[
\calN_\kappa(Z, \nn) = \Big\{ x \in X \ST 
  d_X(x, Z) \leq  \nn \cdot \kappa(x)  \Big\}.
\]

\begin{figure}[h]
\begin{tikzpicture}
 \tikzstyle{vertex} =[circle,draw,fill=black,thick, inner sep=0pt,minimum size=.5 mm] 
[thick, 
    scale=1,
    vertex/.style={circle,draw,fill=black,thick,
                   inner sep=0pt,minimum size= .5 mm},
                  
      trans/.style={thick,->, shorten >=6pt,shorten <=6pt,>=stealth},
   ]

 \node[vertex] (a) at (0,0) {};
 \node at (-0.2,0) {$\go$};
 \node (b) at (10, 0) {};
 \node at (10.6, 0) {$b$};
 \node (c) at (6.7, 2) {};
 \node[vertex] (d) at (6.68,2) {};
 \node at (6.7, 2.4){$x$};
 \node[vertex] (e) at (6.68,0) {};
 \node at (6.7, -0.5){$x_{b}$};
 \draw [-,dashed](d)--(e);
 \draw [-,dashed](a)--(d);
 \draw [decorate,decoration={brace,amplitude=10pt},xshift=0pt,yshift=0pt]
  (6.7,2) -- (6.7,0)  node [black,midway,xshift=0pt,yshift=0pt] {};

 \node at (7.8, 1.2){$\nn \cdot \kappa(x)$};
 \node at (3.6, 0.7){$||x||$};
 \draw [thick, ->](a)--(b);
 \path[thick, densely dotted](0,0.5) edge [bend left=12] (c);
\node at (1.4, 1.9){$(\kappa, \nn)$--neighbourhood of $b$};
\end{tikzpicture}
\caption{A $\kappa$-neighbourhood of a geodesic ray $b$ with multiplicative constant $\nn$.}
\end{figure}
\end{definition}

\begin{definition} [$\kappa$-Morse I, $\kappa$-Morse II] \label{D:k-morse} \label{Def:Morse} 
Let $Z \subseteq X$ be a closed set, and let $\kappa$ be a concave sublinear function. 
We say that $Z$ is \emph{$\kappa$-Morse} if one of the following equivalent (see Proposition 3.10 \cite{QRT22})  condition holds:
\begin{enumerate}

\item[I] There exists a proper function 
$\mm_Z : \mathbb{R}^2 \to \mathbb{R}$ such that for any sublinear function $\kappa'$ 
and for any $r > 0$, there exists $R$ such that for any $(\qq, \sQ)$-quasi-geodesic ray $\beta$
with $\mm_Z(\qq, \sQ)$ small compared to $r$, if 
$$d_X(\beta_R, Z) \leq \kappa'(R)
\qquad\text{then}\qquad
\beta|_r \subset \calN_\kappa \big(Z, \mm_Z(\qq, \sQ)\big)$$

\item[II] There is a function
\[
\mm'_Z \from \RR_+^2 \to \RR_+
\]
so that if $\beta \from [s,t] \to X$ is a $(\qq, \sQ)$--quasi-geodesic with end points 
on $Z$ then
\[
[s,t]_{\beta}  \subset \calN_{\kappa} \big(Z,  \mm'_Z(\qq, \sQ)\big). 
\]
\end{enumerate}
\end{definition} 

\begin{remark}
By taking the maximum function of $\mm_Z, \mm'_Z,$ we may and will always assume that both conditions hold for the same $\mm_Z.$ Further,
\begin{equation} 
\mm_Z(\qq, \sQ) \geq \max(\qq, \sQ). 
\end{equation} 
\end{remark}

\begin{definition}[$\kappa$--contracting sets] \label{Def:Contracting}
For $x \in X$, define $\Norm{x} = d_X(\go, x)$. 
For a closed subspace $Z$ of $X$, we say $Z$ is \emph{$\kappa$--contracting} if there 
is a constant $\cc_Z$ so that, for every $x,y \in X$
\[
d_X(x, y) \leq d_X( x, Z) \quad \Longrightarrow \quad
diam_X \big( x_Z \cup y_Z \big) \leq \cc_Z \cdot \kappa(\Norm x).
\]
In fact, to simplify notation, we drop $\Norm{\cdot}$ when it appears in the $\kappa$ function and write  $\kappa(x)$ instead of  $\kappa(\Norm{x})$. 

\end{definition} 

\begin{figure}[H]
\begin{tikzpicture}[scale=0.8]
\draw[thick] (0,0) -- (8,0);

\node[left] at (0,0) {$\go$};

\draw[red, dashed] (5.32,2.3)  .. controls ++(-.3,-.4) and ++(0,1) .. (4.5,0) ;

\draw[red, dashed] (2.68,2.3)  .. controls ++(.3,-.4) and ++(0,1) .. (3.5,0) ;

\draw[very thick, red] (3.5,0) -- (4.5,0);

\node[below] at (4,0) {$ \leq \cc_{\alpha} \kappa(||x||)$};

\node[below] at (8,0) {$\alpha$};

\draw[thick,red] (4,3) circle (1.5cm);

\node[ left] at (4,3) {$x$};

\draw[thin,dashed, ->] (0,0) -- (4,3);

\node[ left] at (2,2) {$||x||$};

\draw[thick,fill=black] (4,3) circle (0.05cm);

\end{tikzpicture}
\caption{A $\kappa$--contracting geodesic ray.}
\end{figure}

In \CAT     spaces, a geodesic is $\kappa$--contracting if and only if it is $\kappa$--Morse\cite{QR19}. 

$\kappa$-Morse quasi-geodesic rays in $X$ are grouped into equivalence classes to form $\pka X$.
\begin{definition}[$\kappa$--equivalence classes in $\pka X$] \label{Def:Fellow-Travel}
Let $\beta$ and $\gamma$ be two quasi-geodesic rays in $X$. If $\beta$ is in some 
$\kappa$--neighbourhood of $\gamma$ and $\gamma$ is in some 
$\kappa$--neighbourhood of $\beta$, we say that $\beta$ and $\gamma$ 
\emph{$\kappa$--fellow travel} each other. This defines an equivalence
relation on the set of quasi-geodesic rays in $X$ (to obtain transitivity, one needs to change $\nn$ of the associated $(\kappa, \nn)$--neighbourhood). 
\end{definition}

We denote the equivalence class 
that contains $\beta$ by $[\beta]$:
\begin{definition}[Sublinearly Morse boundary]
Let $\kappa$ be a sublinear function as specified in Section~\ref{subfunction} and let $X$ be a \CAT space.
\[\pka X : = \{ \text{ all } \kappa\text{-Morse quasi-geodesics } \} / \kappa\text{-fellow travelling}\]
We define the topology of $\pka X$ in Section~\ref{coarsetop}.
\end{definition}

We also use $\bfa, \bfb$ to denote $\kappa$-equivalence classes in $\pka X$. We need the following fact that since $X$ is \CAT, there is a unique geodesic ray in each equivalence class:

\begin{lemma}[Lemma 3.5, \cite{QR19}]\label{uniquegeodesic} 
Let $X$ be a \CAT space.  Let $b \from [0,\infty) \to X$ be a geodesic ray in $X$. Then $b$ is the unique geodesic 
ray in any $(\kappa, \nn)$--neighbourhood of $b$ for any $\nn$. That is to say,  there is an 1-1 embedding of the set of points in $\pka X$ into the points of $\partial_{v} X$.
\end{lemma}

\begin{proof}
For each element $\bfa \in \pka X$, consider its unique geodesic ray $\alpha$. The associated $\alpha(\infty)$ is a element of $\partial_{v} X$. By Lemma 3.5, \cite{QR19}, each equivalence class contains a unique geodesic ray.  Meanwhile, if two elements $\bfa, \bfb \in \pka X$ contain the same geodesic ray, they are in fact the same set of quasi-geodesics, therefore this map is well-defined.  Therefore we have an embedding of the set of points in $\pka X$ into the points of $\partial_{v} X$.
\end{proof}

\subsubsection{Coarse cone topology on $\pka X$}\label{coarsetop}

 We equip $\partial_\kappa X$ with a topology which is 
a coarse version of the visual topology. In visual topology, if two geodesic rays fellow travel for a long time, then they are ``close''. In this coarse version, if two geodesic rays and all the quasi-geodesic rays in their respect equivalence classes remain close for a long time, then they are close. Now we define it formally. First, we say a quantity $\sD$ \emph{is small compared to a radius $\rr>0$} if 
\begin{equation} \label{Eq:Small} 
\sD \leq \frac{\rr}{2\kappa(\rr)}. 
\end{equation} 

%
Recall that given a $\kappa$--Morse quasi-geodesic ray $\beta$, we denotes its associated Morse gauges functions $\mm_{\beta}(\qq, \sQ)$. These are multiplicative constants that give the heights of the $\kappa$--neighbourhoods.
\begin{definition}[topology on $\pka X$]
Let $\bfa \in \partial_\kappa X$ and $\alpha_0 \in \bfa$ be the unique geodesic in the class $\aa.$ Define $ \calU_{\kappa}(\bfa, \rr)$ to be the set of points $\bfb$ such that for any $(\qq, \sQ)$-quasi-geodesic of $\bfb$, denoted  $\beta$, 
such that $\mm_{\beta}(\qq, \sQ)$ is small compared to $\rr$, satisfies 
\[
\beta|_{\rr} \subset \calN_{\kappa}\big(\alpha_{0}, \mm_{\alpha_{0}}(\qq, \sQ)\big).\]

Let the topology of $\pka X$ be the topology induced by this neighbourhood system. The following fact shows that a $\kappa$-boundary is well defined with respect the associated group.
\end{definition}

\begin{figure}[H]
\begin{tikzpicture}[scale=0.5]
 \tikzstyle{vertex} =[circle,draw,fill=black,thick, inner sep=0pt,minimum size=.5 mm]
 
[thick, 
    scale=1,
    vertex/.style={circle,draw,fill=black,thick,
                   inner sep=0pt,minimum size= .5 mm},
                  
      trans/.style={thick,->, shorten >=6pt,shorten <=6pt,>=stealth},
   ]

  \node[vertex] (o) at (-10,0)  [label=left:$\go$] {}; 
  \node (o1) at (2, 5)[label=right:\color{red} geodesic $\beta_{0}$] {}; 
  \draw[thick, red]  (o) to [ bend right=20] (o1){};
  \node[vertex] (a) at (7,0)  [label=below:$\alpha_{0}$] {};
 \path[thick, densely dotted](-10,0.7) edge [bend left=12] (4, 3){};
 \node at (5.8, 2.5){$(\kappa, \mm_{\alpha_{0}}(\qq, \sQ))$-neighbourhood of $\alpha_{0}$};

       \node (a1) at (-1, 5) {}; 
   \node at (6,4) { $(\qq, \sQ)$--quasi-geodesic $\beta$};  
  \draw[dashed]  (-0.5, 5) to  [bend right=5] (-0.5, -1){};
  \node at (-0.5, -1){$\rr$};

   \draw[thick]  (o)--(a){};

  \pgfsetlinewidth{1pt}
  \pgfsetplottension{.75}
  \pgfplothandlercurveto
  \pgfplotstreamstart
  \pgfplotstreampoint{\pgfpoint{-10cm}{0cm}}  
  \pgfplotstreampoint{\pgfpoint{-9cm}{1cm}}   
  \pgfplotstreampoint{\pgfpoint{-8cm}{0.5cm}}
  \pgfplotstreampoint{\pgfpoint{-7cm}{1.5cm}}
  \pgfplotstreampoint{\pgfpoint{-6cm}{1cm}}
  \pgfplotstreampoint{\pgfpoint{-5cm}{2cm}}
  \pgfplotstreampoint{\pgfpoint{-4cm}{1.1cm}}
  \pgfplotstreampoint{\pgfpoint{-3cm}{1cm}}
  \pgfplotstreampoint{\pgfpoint{-2cm}{1.5cm}}
  \pgfplotstreampoint{\pgfpoint{-1cm}{1cm}}
    \pgfplotstreampoint{\pgfpoint{-0.5cm}{2cm}}
      \pgfplotstreampoint{\pgfpoint{-0.1cm}{3cm}}
        \pgfplotstreampoint{\pgfpoint{1cm}{4cm}}
          \pgfplotstreampoint{\pgfpoint{2cm}{4cm}}
  \pgfplotstreamend 
  \pgfusepath{stroke}
       
  \end{tikzpicture}
 
  \caption{ $\bfb \in \calU_{\kappa}(\bfa, \rr)$ because the quasi-geodesics of $\bfb$ such as $\beta, \beta_{0}$ stay inside the associated $(\kappa, \mm_{\alpha_{0}}(\qq, \sQ))$-neighborhood of $\alpha_{0}$ (as in Definition~\ref{Def:Neighborhood} ), up to distance $\rr$. }
 \end{figure}

\begin{theorem} [\cite{QRT22}]
Let $X$ be a proper metric space and let $\kappa$ be a sublinear function. The $\kappa$--boundary of $X$, denoted $\pka X$, is a quasi-isometrically invariant topological space. Furthermore, $\pka X$ is metrizable.
\end{theorem}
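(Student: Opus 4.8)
The plan is to establish the two assertions by independent arguments: metrizability through the Urysohn metrization theorem, and quasi-isometry invariance by promoting a quasi-isometry to a homeomorphism of boundaries.

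For metrizability I would verify that $\pka X$ is Hausdorff, regular, and second countable, and then invoke Urysohn. First countability is essentially built into the topology of Section~\ref{coarsetop}: for a class $\bfa$ with its unique geodesic representative $\alpha_0$ (Lemma~\ref{uniquegeodesic}), the nested family $\{\calU_\kappa(\bfa,\rr)\}_{\rr\in\mathbb{N}}$ is a countable neighbourhood basis, since increasing $\rr$ only shrinks the neighbourhood. The Hausdorff property also follows from uniqueness of geodesic representatives: distinct classes $\bfa\ne\bfb$ have distinct geodesics $\alpha_0\ne\beta_0$ emanating from $\go$, which diverge by convexity of the distance function, so $\calU_\kappa(\bfa,\rr)$ and $\calU_\kappa(\bfb,\rr)$ are disjoint for $\rr$ large. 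Regularity I would extract from the same nesting, showing that the closure of $\calU_\kappa(\bfa,\rr')$ sits inside $\calU_\kappa(\bfa,\rr)$ once $\rr'$ is large compared to $\rr$; here Theorem~\ref{Thm:Strong} is what upgrades ``sublinearly close out to radius $\rr'$'' into genuine containment in a $\kappa$--neighbourhood out to radius $\rr$. Second countability is the delicate ingredient: since $X$ is proper it is separable, and fixing a countable dense set $D\subset X$ I would use geodesic rays from $\go$ toward points of $D$ to build a countable family of classes, then argue it is dense with the integer-radius neighbourhoods centred on it forming a basis. The care needed is that density must be checked in the \emph{coarse} topology of Section~\ref{coarsetop}, which is strictly finer than the subspace topology inherited through the embedding of Lemma~\ref{embedding}, so one needs a short approximation argument showing every $\kappa$--Morse ray is a coarse-topology limit of rays through $D$.

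For quasi-isometry invariance, let $f\from X\to Y$ be a $(\kk,\sK)$--quasi-isometry with quasi-inverse $f^{-1}$. After continuification (Lemma III.1.11 of \cite{BH1}), $f$ carries quasi-geodesic rays to quasi-geodesic rays, and I would first show it preserves the $\kappa$--Morse property: via the equivalence $\kappa$--Morse $\Leftrightarrow$ $\kappa$--contracting and the invariance of contraction under quasi-isometries, the image of a $\kappa$--contracting ray is again $\kappa$--contracting, the sublinear function changing only by a bounded multiplicative factor that is absorbed by the renormalization $\kappa\le\overline{\kappa}\le\sC\,\kappa$ of Section~\ref{subfunction}. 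Since $f$ sends $\kappa$--neighbourhoods to $\kappa$--neighbourhoods with controlled constants, it respects $\kappa$--fellow travelling and descends to a well-defined map $\partial_\kappa f\from\pka X\to\pka Y$, with the map induced by $f^{-1}$ as its inverse, giving a bijection. It then remains to check continuity, i.e.\ that $(\partial_\kappa f)^{-1}\big(\calU_\kappa(\partial_\kappa f(\bfa),\rr)\big)$ contains some $\calU_\kappa(\bfa,\rr')$.

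I expect this continuity step to be the main obstacle, and the place where Theorem~\ref{Thm:Strong} is indispensable. A quasi-geodesic $\beta$ of a nearby class $\bfb$ must have image $f\circ\beta$ close to $f\circ\alpha_0$, but $f\circ\alpha_0$ is only a quasi-geodesic whereas the target neighbourhoods are defined through the \emph{unique geodesic} representative of $\partial_\kappa f(\bfa)$; one must therefore show $f\circ\alpha_0$ $\kappa$--fellow travels that geodesic with uniform constants, propagate the radius $\rr'$ through $f$ (losing a factor of order $\kk$), and keep the Morse gauges small compared to the radius. Making all of these estimates uniform over the entire class $\bfb$ --- that is, over all of its quasi-geodesic representatives with their varying Morse gauges $\mm_\beta(\qq,\sQ)$ --- is the crux, and is precisely the strong Morse stability delivered by Theorem~\ref{Thm:Strong}.
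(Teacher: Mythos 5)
First, a point of order: the paper does not prove this theorem at all --- it is imported from \cite{QR19}, so there is no in-paper argument to compare yours against. Judged on its own terms, your quasi-isometry-invariance half follows the same outline as the source: show the $\kappa$--Morse property is preserved under quasi-isometry via the contracting/Morse equivalence, check that the induced map on $\kappa$--classes is well defined with the quasi-inverse inducing its inverse, and use the strong Morse property (Theorem~\ref{Thm:Strong}) to compare against the unique geodesic representatives and obtain continuity. That part is a fair, if compressed, outline, and you correctly identify the continuity step as the place where Theorem~\ref{Thm:Strong} does the real work.

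The metrizability half has a genuine gap. Urysohn requires second countability, and your proposed countable dense family --- geodesic rays from $\go$ toward points of a countable dense subset $D\subset X$ --- does not in general produce points of $\pka X$: there is no reason such rays are $\kappa$--Morse, and in a proper CAT(0) space with no further hypotheses the $\kappa$--Morse directions can be a very thin subset of $\partial_v X$ (possibly empty). Moreover, density of a countable family of contracting directions in the coarse topology is essentially the content of Theorem~\ref{densesubset}, whose proof uses a cocompact group action that is not available here, and separability of $\pka X$ is nowhere asserted (metrizable does not imply separable). The proof in \cite{QR19} sidesteps this entirely by verifying a metrization criterion for neighbourhood systems of Frink/Alexandroff--Urysohn type: each point has a countable nested basis $\{\calU_\kappa(\bfa,\rr)\}_{\rr\in\mathbb{N}}$ satisfying a coherence condition (for every $\rr$ there is $\rr'$ such that $\calU_\kappa(\bfb,\rr')\cap\calU_\kappa(\bfa,\rr')\neq\emptyset$ forces $\calU_\kappa(\bfb,\rr')\subset\calU_\kappa(\bfa,\rr)$), which yields a metric with no separability input. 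Finally, even the nesting you take for granted is not automatic: as $\rr$ grows, the set of representatives whose gauge is ``small compared to $\rr$'' also grows (since $\rr/(2\kappa(\rr))\to\infty$), so more quasi-geodesics must be tested in $\calU_\kappa(\bfa,\rr')$ than in $\calU_\kappa(\bfa,\rr)$; the containment $\calU_\kappa(\bfa,\rr')\subset\calU_\kappa(\bfa,\rr)$ for $\rr'>\rr$ again requires Theorem~\ref{Thm:Strong}, not just monotonicity of the definition.
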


\begin{proposition} [Proposition 4.4 \cite{QRT22}] \label{Prop:Normal}
For each $\bfb \in \partial_\kappa X$ and $\rr > 0$, there exists a radius 
$\rr_\bfb$ such that 
\begin{enumerate} 
\item for any point $\bfa$ there exists $\rr_\bfa$ so that 
\[
\bfa \in \calU_{\kappa}(\bfb, \rr_\bfb) 
\qquad\Longrightarrow\qquad
\calU_{\kappa}(\bfa, \rr_\bfa) \subset \calU_{\kappa}(\bfb, \rr).
\]
\item for any point $\bfa$ there exists $\rr_\bfa$ so that
\[
\bfa  \notin \calU_{\kappa}(\bfb, \rr) 
\qquad\Longrightarrow\qquad
\calU_{\kappa}(\bfa, \rr_\bfa) \cap \calU_{\kappa}(\bfb , \rr_\bfb) = \emptyset. 
\]
\end{enumerate} 
\end{proposition}

\section{Dense subsets and minimality of $G$-action}

In this section we prove two results concerning dense subsets of $\pka G$. First we show that the set of all Morse directions, $\partial_{1} G$ is dense in $\pka G$, secondly and more generally, the action of $G$ is minimal on $\pka G$ and as a consequence,   a Morse element in  $G$ acts with North-South dynamics on the boundary. To begin with,  in this section, let $G$ acts geometrically on a \CAT space $X$.

Let $A$ be a geodesic ray or a geodesic segment. We say that a sequence of group elements $\{g_i \}$ \emph{tracks} $A$ if there exists a strict fundamental domain $D$ such that the union $\cup_i g_i\cdot D$ covers longer and longer subsegment of $A$ as $i$ increases. Conventionally we order the elements such that $\cup_{i \leq n} g_i\cdot D$ covers a longer and longer segment of $A$ as $n$ increases.

\begin{definition}[Angles in \CAT spaces]]\cite[II.3.1]{BH1}
 Let $X$ be a $\CAT$ space and let $\ell \from [0,a] \to X$ and $\ell' \from [0, a'] \to X$ be two geodesic paths issuing from the same point $\ell(0) = c'(0)$.
Then the comparison angle $\angle_{\mathbb{E}}(c(t), c'(t'))$ is a non-decreasing function of both $
t,t' \geq 0$, and the \emph{Alexandrov angle} $\angle(c,c')$ is equal to 
\[ \lim_{t,t'\to 0} \angle_{c(0)} (c(t),c'(t')) = \lim_{t \to 0} \angle (c(t), c'(t)).\] Hence, we define:
\[ \angle (c, c') = \lim_{t \to 0} 2 \arcsin \frac{1}{2t} d(c(t), c'(t)).\]

We also refer to the Alexandrov angle as the local angle.
\end{definition}
\begin{lemma}\label{concatenationofgauges}
Let $c_0$ be a concatenation of  a geodesic segment and an infinite geodesic ray as follows: 
\begin{itemize}
    \item the geodesic segment is the initial segment of a $\kappa$-Morse geodesic ray  labelled $\underline{b}$;
    \item the infinite geodesic ray is a $\kappa$-Morse geodesic ray labelled $\underline{a}$.
    \item Suppose in addition that the Alexandrov angle are the point of concatenation is bounded below by the right angle.
    \begin{figure}[H]
\begin{tikzpicture}[scale=0.5]
 \tikzstyle{vertex} =[circle,draw,fill=black,thick, inner sep=0pt,minimum size=.5 mm]
 
[thick, 
    scale=1,
    vertex/.style={circle,draw,fill=black,thick,
                   inner sep=0pt,minimum size= .5 mm},
                  
      trans/.style={thick,->, shorten >=6pt,shorten <=6pt,>=stealth},
   ]

  \node[vertex] (o) at (0,0)  [label=left:$\go$] {}; 
  \node [vertex] (o1) at (5, 0){}; 
    \node(o2) at (9, 4){}; 
  \node at (8, 3)[label=right: $\underline{a}$] {};
   \node at (2.3, 0)[label=below right: $\underline{b}$] {};
   \node at (4.7, 0.9) {\small $\geq \frac{\pi}{2}$};
  \draw[thick, black]  (4.5, 0) to [ bend left=20] (5.3, 0.3){};
  \draw[thick, black] (o)--(o1)--(o2){};
%
%
%

  \end{tikzpicture}
 
  \caption{ The local angle in a \CAT space.}
 \end{figure}

\end{itemize}
Then $c_0$ is $\kappa$-Morse, and its Morse gauge is bounded above by
\[
m_{c_0}(q, Q) \leq m'_{\underline{b}}(3q, Q) + m_{\underline{a}}(3q, Q)
\]

\end{lemma}
\begin{proof} We will show that $c_0$ satisfies the $\kappa$-Morse II condition.
Consider a quasi-geodesic ray $c'$ that sublinearly tracks $c_0$.
Let $p$ be the point of concatenation on $c_0$ and project $p$ to $c'$ and label the projection as $p_{c'}$. By The Surgery Lemma \cite[Lemma 2.5]{QR19},
\[ [\go, p_{c'}]_{c'} \cup [p, p_{c'}]
\]

is a $(3q, Q)$ quasi-geodesic segment whose endpoints are on $\underline{b}$, thus by $\kappa$-Morse II it is in the $m'_{\underline{b}}(3q, Q)$ neighbourhood of $\underline{b}$. Define $c''$ to to be the quasi-geodesic ray with $c'' \subseteq c'$ and $c''(0)=p_{c'}$. Again, by The Surgery Lemma \cite[Lemma 2.5 ]{QR19},
\[ [p, p_{c'}] \cup c'' \]
is $(3q, Q)$ quasi-geodesic ray that sublinearly tracks $\underline{a}$. Likewise by $\kappa$-Morse I it is in the  $m_{\underline{a}}(3q, Q)$ neighbourhood of $\underline{a}$. Thus $c'$ is in a $m'_{\underline{b}}(3q, Q) + m_{\underline{a}}(3q, Q)$-neighbourhood of $c_0$.
\end{proof}

   \begin{figure}[H]
\begin{tikzpicture}[scale=0.5]
 \tikzstyle{vertex} =[circle,draw,fill=black,thick, inner sep=0pt,minimum size=.5 mm]
 
[thick, 
    scale=1,
    vertex/.style={circle,draw,fill=black,thick,
                   inner sep=0pt,minimum size= .5 mm},
                  
      trans/.style={thick,->, shorten >=6pt,shorten <=6pt,>=stealth},
   ]

  \node[vertex] (o) at (0,0)  [label=left:$\go$] {}; 
  \node [vertex] (o1) at (5, 0)[label=below:$p$] {}; 
    \node(o2) at (9, 4){}; 
  \node at (8, 3)[label=right: $\underline{a}$] {};
    \node at (7.5, 5.5)[label=right: $c''$] {};
     \node at (3.3, 4)[label=right: $c'$] {};
   \node at (2.3, 0)[label=below right: $\underline{b}$] {};
   \node  [vertex] at (4, 2)[label=below left: $p_{c'}$] {};
    \draw[thick, dashed] (o1)--(4,2){};

  \draw[thick, black] (o)--(o1)--(o2){};

         \pgfsetlinewidth{1pt}
  \pgfsetplottension{.75}
  \pgfplothandlercurveto
  \pgfplotstreamstart
  \pgfplotstreampoint{\pgfpoint{0cm}{0cm}}  
  \pgfplotstreampoint{\pgfpoint{1cm}{2cm}}   
  \pgfplotstreampoint{\pgfpoint{2cm}{1.5cm}}
  \pgfplotstreampoint{\pgfpoint{3cm}{3cm}}
  \pgfplotstreampoint{\pgfpoint{4cm}{2cm}}
  \pgfplotstreampoint{\pgfpoint{5cm}{3cm}}
  \pgfplotstreampoint{\pgfpoint{5.5cm}{2.8cm}}
    \pgfplotstreampoint{\pgfpoint{6.5cm}{4.5cm}}
        \pgfplotstreampoint{\pgfpoint{7.5cm}{4cm}}
          \pgfplotstreampoint{\pgfpoint{7.5cm}{5.5cm}}

  \pgfplotstreamend 
  \pgfusepath{stroke}

  \end{tikzpicture}
  \caption{The quasi-geodesic ray $c'$ is covered by the union of a quasi-geodesic segment $[\go, p_{c'}]_{c'} \cup [p, p_{c'}]$, and a quasi-geodesic ray $[p, p_{c'}] \cup c''$. }
\end{figure}

\begin{theorem}\label{minimality theorem}
Suppose $G$ acts geometrically on a proper \CAT space $X$ and $|\partial_{\kappa} X| \geq 3$. For each $\aa \in \partial_{\kappa} X$, such that $G\cdot \aa$ is dense in $\partial_{\kappa}X$. 
\end{theorem}
\begin{proof} 

By Theorem 1.1 in \cite{Hamenstaedt2008RankoneIO}, the action of $G$ on its visual boundary $\partial X$ is minimal. In particular, no element in the visual boundary is a global fixed point of $G$. Now, since elements of the $\kappa$-boundary $\partial_\kappa X$ are also elements in the visual boundary, we conclude that $G.\aa \neq \aa$ for any $\bfa \in \partial _\kappa X.$

 Fix a point $\bfa \in \partial_\kappa X$, by the above, there exists a group element  $g \in G$ such that $g \bfa \neq  \bfa$. Sine $\pka X$ is a visibility space \cite{Zalloum_2022}, we can choose a bi-infinite geodesic line, denoted $\ell$, that connects $\bfa$ and $g \cdot \bfa$ $\pka G$, we will write $\ell^+$ for $\bfa$ and $\ell^-$ for $g \cdot \bfa$. Given any point $\bb \in \partial_\kappa X$ (not necessarily different from $\ell^+$ or $\ell^-$), and let $\ub$ be a geodesic ray representing $\bb$ with $\ub(0)=\go$. It suffices to show that a subset of the points in $G\cdot \aa$ converges to $\ub$. If $\bb = \ell^+$ or $\bb = \ell^-$ then
we are done. Otherwise fix a point $p \in \ell$.

 Since $G$ acts on $X$ cocompactly, there exist a constant $C$ and elements $\{g_i \in G, i=1,2, 3,... \}$ such that
 $d(g_i \cdot p, \ub(i)) \leq C \, \text{ for all } i \in \mathbb{N}$.

  
  \begin{figure}[!h]
\begin{center}
\begin{tikzpicture}[scale=0.7]


\draw[thick]  (0,-.3) -- ++(.1,3.3) node[above] {$\underline{b}$};

\node[below, black] at  (0,-.3) {$\go$};







\draw[thick,fill=black] (-.2,.3) circle (0.03cm);

\draw[thick,fill=black] (.2,.5) circle (0.03cm);
\draw[thick,fill=black] (-.17,.8) circle (0.03cm);

\draw[thick,fill=black] (.19,1.28) circle (0.03cm);

\draw[thick,fill=black] (-.2,1.8) circle (0.03cm);

\draw[thick,fill=black] (.17,2.1) circle (0.03cm);

\draw[thick,fill=black] (-.17,2.5) circle (0.03cm);

\draw[<->, thick,red] ({-3*cos(15)},{3*sin(15)}) .. controls ++({3*cos(15)},{3*sin(15)})  and ++({-2*cos(15)},{2*sin(15)}) ..({3*cos(15)},{3*sin(15)}) node[right] {$\alpha_i =g_i \ell^-$};

\node[<->, left, red] at ({-3*cos(15)},{3*sin(15)}) {$\beta_i = g_i \ell^+$};

\node[right, black] at (.19,1.5) {$p_i$} ;

\draw[blue]  (0,-.3) -- ++(.2,1.6);





\end{tikzpicture}
\end{center}
\caption{ Translates of $\ell$ by $\{g_i\}$ along $\underline{b}$}

\end{figure}

 Denote $l_i:=g_i \cdot \ell$. Also let $p_i:=g_i \cdot p$ and consider the quasi-geodesics \[ \alpha_i:=[\go, p_i] \cup [p_i, l^+_i), \beta_i:=[\go, p_i] \cup [p_i, l^-_i).\] Since $l_i$ is a line in a \CAT space, at least one of the local angles $\angle(\go, p_i,l_i(\infty))$, $\angle(\go, p_i,l_i(-\infty))$ is greater than or equal $\frac{\pi}{2}.$ Consequently, at least one of $\alpha_i$ or $\beta_i$ is a $(3,0)$-quasi-geodesic ray. For each $i$, let $\gamma_i \in \{\alpha_i, \beta_i\}$ be such a $(3,0)$-quasi-geodesic ray. Each $\gamma_i$ is $\kappa$-Morse as its tail is $\kappa$-Morse. By Lemma~\ref{concatenationofgauges}, for all $i$ and for all $q,Q$ we have, \[m_{\gamma_i}(q, Q)<m'_b(3q, Q)+m_l(3q, Q)+C',\]  where $C'$ is a constant depending only on $C.$ Since $m_{\gamma_i}$ does not depend on $i$ we write it as $m_\gamma$.  Let $q,Q$ be small compared to $r$. Let 
\[ \kappa'=3m_\gamma C \kappa+C, \]
Since $\ub$ is $\kappa$-Morse I, we have that for each $r>0$, and for each pair of $(q, Q)$ small compared to $r$,  there exists $R(q, Q, r, \kappa') \geq 1$ such that the conclusion of the $\kappa$-Morse I notion holds. Furthermore, by the proof of Theorem 3.14 in \cite{QR19}, since $\kappa'$ is concave, if $R=
R(q, Q, r, \kappa')$ satisfies the definition of $\kappa$-Morse I, then all $R > R(q, Q, r, \kappa')$ also satisfies the definition of $\kappa$-Morse I.

Recall that $\gamma_i \in \{\alpha_i, \beta_i\}$. Let $i = \lceil R \rceil$. By construction we have $d(\ub_R, \gamma_{\lceil R \rceil} ) \leq C.$ Thus, there exist a point $s_ {\lceil R \rceil}\in [0,\infty)$ with $d(\ub_R, \gamma_{\lceil R \rceil}(s_{\lceil R \rceil})) \leq C.$ In particular, since $\gamma_{\lceil R \rceil}$ is a $(3,0)$-quasi-geodesic, we have 

$$R-C \leq d(\gamma(s_{\lceil R \rceil}), \go) \leq R+C \, \Longrightarrow \, s_{\lceil R \rceil} <3(R+C)$$

Now it remains to show that $\gamma_{\lceil R \rceil} \in \calU(\bb, r)$. Let $\zeta$ be a $(q,Q)$-quasi-geodesic in the class of $\gamma_{\lceil R \rceil},$ then by \cite[Lemma 3.4]{QRT22} , we get that 
\begin{align*}
d(\gamma_{\lceil R \rceil}(s_{\lceil R \rceil}),\zeta) &\leq  m_\gamma \kappa(s_i) &\text{as $\zeta$ and $\gamma_{\lceil R \rceil}$ are both $\kappa$-Morse.}\\
                                     &\leq m_\gamma\kappa(3(R+C)) &\text{ as $\kappa$ is monotone nondecreasing.}\\
                                     &\leq m_\gamma\kappa(3CR) \leq 3m_\gamma C \kappa(R) &\text{as $\kappa$ is convex.}
                                     \end{align*} 
                                     
                                     This provides a point $x \in \zeta$ with $d(x, \gamma_i(s_i)) \leq m_\gamma C \kappa(R).$ Hence, by the triangle inequality, we get that 
                                     $d(x, b(R)) \leq 3m_\gamma C \kappa(R)+C.$ Now, recall that $\kappa'=3m_\gamma C \kappa+C$, thus $R(r, q, Q, \kappa')$ is precisely that
                                     \[
                                     d(x, b(R)) \leq 3m_\gamma C \kappa(R)+C \Longrightarrow \zeta_r \subseteq \calN_\kappa(\ub, r).
                                     \]
                                     
                                     This holds for every $(q, Q)$-quasi-geodesic representative of $\gamma_i$ and thus we have 
\begin{equation}
\gamma_{\lceil R \rceil} \in \calU(\bb, r)
                                     \end{equation}
                                     That is to say, for larger and larger $r$ we can find an associated sequence of $\gamma_i$ that is in $\calU(\bb, r)$, Thus up to a subsequence $\gamma_i = g_i \cdot \aa$ limits to $\bb$. 

\end{proof}

As a consequence of of the proof we can establish the following:
\begin{corollary}\label{Cork}
Let $\bb$ be a bi-infinite axis of a rank-one element $g \in G$ and let $\bfa$ be another element in $\pka X$ (not necessarily different from the ends of $\bb$). Then for each $r$, there exists a large
enough $n$ such that $g^k \cdot \bfa  \in \calU([(\bb^{+\infty}], r)$ for all $k \geq n$. 
\end{corollary}

\begin{proof}
As established in the proof of Theorem~\ref{minimality theorem}, for a sequence of larger and larger $r$ one can construct as in the proof an associated sequence of $\gamma_i$ that is in $\calU(\bb, r)$. Each $\gamma_i$ has in its class a  $(3, 0)$-quasi-geodesic ray with an initial geodesic segment that is $[\go, g_i \cdot p]$, where the sequence 
\[
\{g_1, g_2, g_3,... \} 
\]
tracks $\bb$.
Since $\bb$ is rank-one, the tracking sequence becomes
\[
\{g, g^2, g^3, g^4... \} 
\]
thus $\gamma_i$ consists of a initial segment $[\go, g^i \cdot p]$. Thus for each $r$ there exists  $\lceil R(r) \rceil$ such that if $i = \lceil R(r) \rceil$ then 
\[
[\gamma_i] \in \calU(\bb, r).
\]
Furthermore, by the proof of Theorem~\ref{minimality theorem}, all $R> R(r)$ works for the definition of $\kappa$-Morse I, thus 
for all $i \geq \lceil R(r) \rceil$, we also have that 
\[
[\gamma_i] \in \calU(\bb, r).
\]
Combine the two we then have that there exists a large
enough $n$ such that $g^k \cdot \bfa  \in \calU([(\bb^{+\infty}], r)$ for all $k \geq n$.
\end{proof}

\begin{remark}\label{rmk:North_South}
Consider a point $\bb \in \partial_\kappa X$ represented by a geodesic ray $b$. Let $p \in X$, $g_i \in G$ and $C>0$ be such that every point $b(t)$ is within $C$ of $g_i \cdot p$ for some $i.$  The above argument shows that for any pair of points $\bfa,\bfa' \in \partial_\kappa X$, the orbit $g_i\{\bfa,\bfa'\}$ has a subsequence converging to $\bb.$ Suppose further that $\bfa \neq \bb$. If we are provided that $g_i\bfa=\bfa$ for all $g_i$, then $g_i\bfa'$ has a subsequence converging to $\bb.$ This observation will be used in the proof of Theorem ~\ref{NSdym}.
\end{remark}

Now we prove a weak version of North-South dynamics for the action of a group on its $\kappa$-boundaries.
\begin{theorem}\label{NSdym}
Let $g \in G$ be a rank-one element. For an open set $U$ containing $g^{\infty}$ and a compact set $\calC \in (\pka G \setminus [g^{-\infty}])$, there exists an $N$ such that 
for all $n \geq N$, we have $g^n \cdot \calC \subset U$.
\end{theorem}
\begin{proof} Fix an open set $U$ containing $g^{+\infty}$ and let $\calU(g^{+\infty}, r)$ be in $U.$
Let $A$ be a line connecting $g^{\infty}$ and $g^{-\infty}$. That is, $A$ is an axis of $g$ and $\{g^i \}$ tracks $A$. For a point $\aa \in \calC \subset \pka X,$ we let $l$ be a geodesic line connecting $[g^{-\infty}]$ to $\aa$ and fix a point $p$ on $l$. let this point $p$ be a new basepoint. We can do this because $\pka G$ is basepoint invariant there exists a natural homeomorphism of the boundaries when changing basepoint.  Define $C:=d(p, A)$, in particular, since $g$ preserves the line $A,$ we have $d(g^np, gA)=d(g^np,A)=C.$ Hence, for each $n$ the quasi-geodesic ray \[\gamma_n:=[\go, g^n \cdot p] \cup [g^n\cdot p, g^n \cdot \aa)\] is a $(3,0)$-quasi-geodesic.

   \begin{figure}[H]
\begin{tikzpicture}[scale=0.8]
 \tikzstyle{vertex} =[circle,draw,fill=black,thick, inner sep=0pt,minimum size=.5 mm]
 
[thick, 
    scale=1,
    vertex/.style={circle,draw,fill=black,thick,
                   inner sep=0pt,minimum size= .5 mm},
                  
      trans/.style={thick,->, shorten >=6pt,shorten <=6pt,>=stealth},
   ]

  \node (o) at (0,0)  [label=left:$g^{-\infty}$] {}; 
  \node (o1) at (13, 0)[label=right:$g^{-\infty}$] {}; 
    \node(o2) at (3, 3)[label=above:$\aa$] {}; 
    
    \node[vertex] at (2.33, 1)[label=above:$p$] {};
     \node[vertex] at (11.26, 1)[label=right:$g^n \cdot p$] {};
      \node at (12, 3)
    [label=above:$g^n \cdot \aa$] {};
    \node at (11, 0)
    [label=below:$A$] {};
    \node at (2.7, 2)
    [label=right:$l$] {};
    \node at (2.3, 0.5)
    [label=right:$C$] {};
     \draw[thick, dotted] (2.33, 1)--(2.33,-0.1){};
     \draw[thick] (o)--(o1){};
     \draw[thick] (o)to [bend right=45](o2){};
     \draw[thick] (3,0)to [bend right=45](6,3){};
       \draw[thick] (6,0)to [bend right=45](9,3){};
       \draw[thick] (9,0)to [bend right=45](12,3){};

  \end{tikzpicture}
  \caption{The quasi-geodesic ray $c'$ is covered by the union of a quasi-geodesic segment $[\go, p_{c'}]_{c'} \cup [p, p_{c'}]$, and a quasi-geodesic ray $[p, p_{c'}] \cup c''$. }
\end{figure}

By Corollary~\ref{Cork}, we have for each $R$, there exists a large enough $n$ with $g^k \cdot \aa \in \calU([g^{+\infty}], R)$ for all $k \geq n$. This holds for each point in $\calC$, i.e. for each point $
\aa \in \calC,$ there exists a large enough power $n_{\aa}$ with $g^{n_{\aa}} \cdot \aa \in \calU([g^{+\infty}], R) \subseteq U.$ Now, notice that
\[g^{n_{\aa}} \cdot \aa \in \calU([g^{+\infty}], R) \Longrightarrow \aa \in (g^{n_{\aa}})^{-1} \calU([g^{+\infty}], R).
\]

Since $g$ is rank-one, $(g^{n_{\aa}})^{-1} = g^{-n_{\aa}}$. We denote the open sets as 
\[U(n, \aa):=g^{-n_{\aa}} \calU(g^\infty, R).\]
Hence, the collection $\{U (n, \aa)| \aa \in \calC\}$ forms a cover for $\calC$ yielding a finite subcover 

$$\{U(n_i, \aa_i)\}^{i=1,2,3...m}.$$ 

Now, choose $N:=\max\{n_1, \cdot \cdot \cdot n_m\}$, and let $n$ be any natural number such that $n \geq N$. We get 
\begin{align*}
g^n \calC &\subseteq g^n (\bigcup_i U(n_i, \aa_i)) &\text{since $\bigcup_i U(n_i, \aa_i)$ is a cover.}\\
          &=g^n (\bigcup_i g^{-n_{\aa_i}} \calU([g^{+\infty}], R)) &\text{Definition of $U(n_i, \aa_i)$.}\\
          & \subseteq \bigcup_i g^{m_i} \calU(g^\infty, R) &\text{Since $n \geq N \geq n_i$. $m_i = n-n_i$.}\\
          & \subseteq \calU(g^\infty, R)\\
          & \subseteq U.\\
\end{align*}
concluding the proof.
\end{proof}

\section{Compact type $\kappa$-boundaries for \CAT  spaces}

In \cite{QR19}, it is shown that if $G = \mathbb{Z}^{2} \star \mathbb{Z}$, then $\pka G$ is not compact. In this section we show that the $\kappa$-boundary is compact if and only if set of rays in this boundary lies in a subspace that is $\delta$-hyperbolic and proper. If $G$ is $\delta$-hyperbolic, then this compact boundary coincide with the Gromov boundary. For a more interesting example, see \cite{Ber} where there is a quasi-isometric copy of an embedded hyperbolic plane in a non-hyperbolic group. 

Consider the subset of all $\sD$-strongly contracting geodesic rays emanating from $\go$ in a \CAT space $X$. The following lemma states that equipping this subset with the subspace topology of the visual boundary or the subspace topology of the $\kappa$-boundary yields homeomorphic spaces. The intuitive reason for this is the following: since quasi-geodesics stay uniformly close to $\sD$-strongly contracting geodesics, the topology of fellow travelling of geodesics (the visual topology) and the topology of fellow travelling of quasi-geodesics (the topology of the $\kappa$-boundary) coincide.

Recall that we use $\partial^{\sD}_{v}X$  to denote the set of all $\sD$-contracting geodesic rays emanating from $\go$ when equipped with the subspace topology of the visual boundary , and use $\partial^{\sD}_{\kappa}X$ when equipped with the subspace topology of the $\kappa$-boundary.

 \begin{proposition} \label{key to quasi-mobius maps}
The identity map $id:\partial^{\sD}_{v}X  \to \partial^{\sD}_{\kappa}X$ is a homeomorphism.

 \end{proposition}

\begin{proof}

 We need to show that the map $id:\partial^{\sD}_{v}X  \to \partial^{\sD}_{\kappa}X$ is a homeomorphism. Since $\partial^{\sD}_{v}X$ is closed (Lemma 3.2 in \cite{ChSu2014}) and $X$ is proper, $\partial^{\sD}_{v}X$ must be compact. Also, the space $\partial^{\sD}_{\kappa}X$ is metrizable by Theorem D in \cite{QR19}. Hence, it suffices to show that the map $id$ is a continuous map. Notice that since every geodesic ray in $\partial^{\sD}_{v}X$ is $\sD$-strongly contracting for the same $\sD$, applying Definition~\ref{Def:Neighborhood}, we get an associated Morse function  such that every geodesic ray  $\beta_0$ is $\mm(\sD)$-Morse, where $\mm(\sD)$ depends only on $\sD$ and satisfies the following: For every constants $\rr>0$, $\nn>0$ and every
sublinear function $\kappa'$, there is an $\sR= \sR(\beta_0, \rr, \nn, \kappa')>0$ where the 
following holds: Let $\eta \from [0, \infty) \to X$ be a $(\qq, \sQ)$--quasi-geodesic ray 
so that $\mm_{\beta_0}(\qq, \sQ)$ is small compared to $\rr$, let $t_\rr$ be the first time 
$\Norm{\eta(t_\rr)} = \rr$ and let $t_\sR$ be the first time $\Norm{\eta(t_\sR)} = \sR$. Then
\[
d_X\big(\eta(t_\sR), \beta_0) \leq \nn \cdot \kappa'(\sR)
\quad\Longrightarrow\quad
\eta[0, t_\rr] \subset \calN_{1}\big(\beta_0, \mm_{\beta_0}(\qq, \sQ)\big)\subset \calN_{\kappa}\big(\beta_0, \mm_{\beta_0}(\qq, \sQ)\big). 
\]. 

\noindent We first claim the following:

 \begin{claim*}
 Given $\bfb \in \partial^{\sD}_{\kappa} X,$ each neighbourhood of $\bfb$, denoted $\calU_{\kappa}\big(\bfb, \rr)$, must contain a visual neighbourhood basis of $\beta_{0}$, the unique geodesic ray in the class of $\bfb$. 
 \end{claim*}
 \begin{proof}
  To see this, let $\beta_{0} \in \bfb$ be the unique geodesic ray starting at $\go$. We wish to show that for any $\rr>0$, there exists $\rr'$ and $\epsilon$ such that $\calU_{v}\big(\beta_{0}, \rr',\epsilon) \subseteq \calU_{\kappa}\big(\bfb, \rr)$

  In other words, we want to show that for any $\rr>0$, there exists $\rr'$ and $\epsilon$ if a geodesic ray $\alpha_{0} \in \bfa$ with $\alpha_{0}(0)=\go$ satisfies $d(\alpha_{0}(t), \beta_{0}(t))<\epsilon$ for $t \leq \rr'$, then, any $(\qq,\sQ)$-quasi-geodesic representative $\alpha$ of $\aa$ with $\mm_{\beta_0}(\qq,\sQ)$ small compared to $\rr$, we have 
\[\alpha|_{\rr}  \subset \calN_\kappa(\zeta, \mm_{\beta_0}(\qq,\sQ)).\]Remember that  $\alpha|_{\rr} = \alpha([0,t_{\rr}])$ where $t_{\rr}$ is the first time where $||\alpha(t)|| = \rr$.

Let $\rr$ be given and let \[n=\max \{\mm_{\beta_0}(\qq,\sQ)+1 | \qq,\sQ \leq \rr\}.\] 
By Definition~\ref{Def:Neighborhood}, with $Z=\beta_0$, there exists an $R=R(\rr,n)$ such that any $(\qq,\sQ)$-quasi-geodesic representative $\beta$ of $\aa$ with $\mm_{\beta}(\qq,\sQ)$ small compared  to $\rr$, we have 
\[d(\beta(t_R),b)<n \Longrightarrow \beta_\rr  \subset \mathcal{N}_{1}(\beta_0, \mm_{\beta}(\qq,\sQ)).\]
 Choose $\rr'=\rr+R$ and $\epsilon=1$. Hence, we want to show that if $d(a(t), b(t))<1$ for $t \leq \rr+R$, then $\beta_\rr  \subset \mathcal{N}_\kappa(b, \mm_{\beta_0}(\qq,\sQ))$ for $\beta$ defined above. Since $a$ is 1-Morse with gauge $\mm,$ the Hausdorff distance between $a$ and $\beta$ is at most $\mm(\qq,\sQ).$ This implies that for any $0<t\leq \rr+R$, we have 
 \[d(a(t)), \beta (i_t))<\mm_{\beta_0}(\qq,\sQ),\]
  for some $i_t.$ 
  
  Therefore, if $t_R$ is the first time with $\Norm{\beta(t_R)}=R$, we must have \[d(a, \beta(t_R)) <\mm_{\beta_0}(\qq,\sQ).\]

   Now, since $d(a(t),b(t))<1$ for all $t < \rr+R$ and as $d(a, \beta(t_R)) <\mm_{\beta_0}(\qq,\sQ),$ the triangle inequality gives 
   \[d(b,\beta(t_R)) \leq d(b,a)+d(a,\beta(t_R))\leq 1+\mm_{\beta_0}(\qq,\sQ),\] which we can rewrite as \[\beta_r  \subset \mathcal{N}_1(b, \mm_{\beta_0}(\qq,\sQ))\subset \mathcal{N}_\kappa(b, \mm_{\beta_0}(\qq,\sQ))\] which proves the claim. \end{proof} 
   Now we are left to show that the map $id$ is continuous. Let $\{\cc_n\}, \cc \in \partial^{\sD}_{v}X$ with $\cc_n \rightarrow \cc$. Assume that $\cc_n \rightarrow \cc$ in $\partial^{\sD}_{v}X$, we want to show that $\cc_n \rightarrow \cc$ in  $\partial^{\sD}_{\kappa}X$. Using the above claim, since each neighbourhood of $\cc$ in  $\partial^{\sD}_{\kappa}X$ contains an open neighborhood of $\partial^{\sD}_{v}X$, the statement is immediate. \end{proof}

\begin{corollary}\label{Morse boundary embedds} For a \CAT space $X$, the natural map $i:\partial_{\star}X\hookrightarrow \partial_{\kappa}X$ is continuous.

\end{corollary}

\begin{proof}
By Lemma \ref{key to quasi-mobius maps},  $i_{\sD}:\partial^{\sD}_{v} X\hookrightarrow \partial_{\kappa}X$ is continuous for each $\sD$. Since \[\partial_{\star} X=\varinjlim \partial^{\sD}_{v} X,\] by definition $i:\partial_{\star}X\hookrightarrow \partial_{\kappa}X$ is continuous.
\end{proof}

%
%

\begin{corollary} \label{hyperbolic implies compactness}
Let $X$ be proper \CAT hyperbolic space and let $\kappa$ be a sublinear function. The space $\partial_{\kappa} X$ is compact, and the  $\kappa$-Morse boundary is homeomorphic to 
the Gromov boundary.

\end{corollary}

\begin{proof}
Since $X$ is a hyperbolic space,  there exists a uniform constant $\sD$  such that every geodesic ray is $\sD$-strongly contracting. This implies that the subspace $\partial^{\sD}_{v}X$ defined above is the entire visual boundary, in other words, we have   $\partial^{\sD}_{v}X = \partial_{v} X.$ Also, since every geodesic ray is $\sD$-strongly contracting, the subspace $\partial^{\sD}_{\kappa}X$ defined above is the full $\kappa$-boundary as a topological space. That is to say, $\partial^{\sD}_{\kappa}X= \partial_{\kappa} X$ as topological spaces. Proposition \ref{key to quasi-mobius maps} then yields a homeomorphism between the visual boundary of $X$, $\partial_{v} X$ and the $\kappa$-boundary of $X$,  $\partial_{\kappa} X.$  Therefore as topological spaces, we have
\[
\partial X \cong \pka X.
\]

Since $X$ is proper, $\partial X$ is compact, thus the $\kappa$-boundary $\partial_{\kappa} X$ must also be compact.
\end{proof}

\begin{theorem} \label{main theorem of the section}
Suppose a group $G$ acts geometrically on a \CAT space $X$ such that $\partial_{\kappa}X \neq \emptyset$, then the following are equivalent: 

\begin{enumerate}
    \item Every geodesic ray in $X$ is $\kappa$--contracting.
    \item Every geodesic ray in $X$ is strongly contracting.
    \item $\partial_{\kappa} X$ is compact.
    \item The space $X$ is hyperbolic.
\end{enumerate}

\end{theorem}
\begin{proof}
We start by showing (3) implies (1). The statement is vacuously true if $\pka X$ is empty. If  $\pka X$ is non-empty, then by \cite{Zalloum_2022}, there exists a rank one isometry $g$. This yields the existence of a strongly contracting geodesic line $l_g$ that is an axis for $g$. Let $\go$ be a point on $l_g$ and let $\beta$ be an arbitrary geodesic ray emanating from $\go$. We show now that $\beta$ is  $\kappa$-contracting. Since the action of $G$ on $X$ is cocompact, there exists a $C \geq 0$ and a sequence of group elements $\{g_i\} \subseteq G$ such that $d(\beta(i), g_i \cdot \go) \leq C$ for each $i \in \mathbb{N}$ (the black dots in Figure \ref{Translates of beta}). Now, consider the sets given by $g_i l_g$. Since $g_{i}$ acts by isometry, these are bi-infinite geodesic lines passing the points $g_i\go$. Recall $[\cdot, \cdot]$ denote a geodesic segment between two points. By \CAT geometry, the concatenation of two geodesic segments at angle bounded below by $\pi/2$ forms a (3,0)-quasi-geodesic segment. Lastly, we denote one end of 
$g_{i} l_g$ by $g_{i} l_g(\infty)$ and the other end by  $g_{i} l_g(-\infty)$.

For each $i$, consider the concatenation $$[\go,g_i\go] \cup [g_i\go, g_{i} l_g(\infty)] \text{ and } [\go,g_i\go] \cup [g_i\go, l_g(-\infty)].$$ By \CAT geometry, one of these two concatenations consists of 
geodesic segments intersecting at angles bounded below by $\pi/2$.  Thus one of the two concatenations is a (3,0)-quasi-geodesic ray starting at $\go$.  Relabel the sequence of (3,0)--quasi-geodesic rays defined by concatenating $[\go,g_i\go]$ with either $[g_i\go, l_g(\infty)]$ or $[g_i\go, l_g(-\infty)]$ to form a sequence of (3,0)--quasi-geodesic rays,  by $\gamma_{i}$.  Since $\partial_{\kappa} X$ is compact, up to passing to a subsequence, $[\gamma_i]$ converges to an element $\bb \in \partial_{\kappa}X$. Let $b$ be the geodesic representative of $\bb$. The convergence implies that  for each $\rr>0,$ there exists $k$ such that if $i \geq k,$ the sequence $\gamma_i$ satisfies

\[
\gamma_i|_{\rr} \subset \calN_{\kappa}\big(b, \mm_{b}(3,0)\big).\]

 Since the subsegment of $\gamma_i$ given by $[\go, g_i\go]$ is in a $C$-neighbourhood of $\beta$, we have for each $r$, $\beta|_{r}$ is in $\calN_{\kappa}\big(b,C+ \mm_{b}(3,0)\big),$ and hence \[ \beta \in \calN_{\kappa}\big(b,C+ \mm_{b}(3,0)\big).\] Lemma \ref{uniquegeodesic} then implies that $\beta=b$. Thus $[\beta] = [b ]= \bb \in \pka X$, which finishes the proof.

\begin{figure}[!h]
\begin{center}
\begin{tikzpicture}[scale=0.7]

\draw[very thick,black] (0,0) circle (3cm);

\draw[thick]  (0,-.3) -- ++(.1,3.3) node[above] {$\beta$};

\node[below, black] at  (0,-.3) {$\go$};




\draw[<->, thick,red] ({-3*cos(-15)},{3*sin(-15)}) .. controls ++({3*cos(-15)},{-3*sin(-15)})  and ++({-2*cos(-15)},{-2*sin(-15)}) ..({3*cos(-15)},{3*sin(-15)}) node[right] {$l_g(\infty)$};

\node[left, red] at ({-3*cos(-15)},{3*sin(-15)}) {$l_g(- \infty)$};

\draw[thick,fill=black] (-.2,.3) circle (0.03cm);

\draw[thick,fill=black] (.2,.5) circle (0.03cm);
\draw[thick,fill=black] (-.17,.8) circle (0.03cm);

\draw[thick,fill=black] (.19,1.28) circle (0.03cm);

\draw[thick,fill=black] (-.2,1.8) circle (0.03cm);

\draw[thick,fill=black] (.17,2.1) circle (0.03cm);

\draw[thick,fill=black] (-.17,2.5) circle (0.03cm);

\draw[<->, thick,red] ({-3*cos(15)},{3*sin(15)}) .. controls ++({3*cos(15)},{3*sin(15)})  and ++({-2*cos(15)},{2*sin(15)}) ..({3*cos(15)},{3*sin(15)}) node[right] {$g_il_g(\infty)$};

\node[<->, left, red] at ({-3*cos(15)},{3*sin(15)}) {$g_i l_g(- \infty)$};

\node[right, black] at (.19,1.5) {$g_i \go$} ;

\draw[blue]  (0,-.3) -- ++(.2,1.6);





\end{tikzpicture}
\end{center}
\caption{ Translates of $l_{g}$ by $\{g_i\}$ along $\alpha_{0}$}\label{Translates of beta}

\end{figure}

Next we show that (1) implies (4). If every geodesic ray is $\kappa$-contracting, then $X$ does not contain an isometric copy of $\mathbb{E}^2,$ and hence, by the Flat Plane Theorem (\cite{BH1} III.$\Gamma$.3 Theorem 3.1), the space $X$ must be hyperbolic. The implication $(4) \Rightarrow (3)$ is Corollary \ref{hyperbolic implies compactness}. 

Lastly, we prove the equivalence between (2) and (4). Since every geodesic ray is $\sN$-Morse for the same $\sN$ in a $\delta$-hyperbolic space, we have $(4) \Rightarrow (2)$.
On the other hand, by way of contradiction, suppose $X$ is not a hyperbolic space, then it must contain isometrically a copy of $\mathbb{E}^2$ by the Flat Plane Theorem (Flat Plane Theorem (\cite{BH1} Theorem 3.1). Let $\go \in \mathbb{E}^2$ and the geodesic rays that stays entirely in the  is not $\sD$--strongly contracting for any $\sD$. Therefore, $(2) \Rightarrow (4)$.
\end{proof}

\begin{remark}
As we can see in \cite{Behrexample}, one can have part of the sublinearly Morse boundary being compact while the rest of the sublinearly Morse boundary is not compact. In this case the theorem can be applied to a part of the space $X$ whose sublinear boundary is the compact portion.
\end{remark}

 \section{successively quasi-m{\"o}bius homeomophisms on the $\kappa$-boundaries}\label{morsedefinitions}

In \cite{Paulin1996}, the author characterizes homeomorphisms between boundaries of cocompact hyperbolic spaces that are induced by quasi-isometries. They characterize such homeomorphisms as the ones that are \emph{quasi-m{\"o}bius}. In this section, as an application of visibility \cite{Zalloum_2022} and using work of \cite{RuthDevin},\cite{Charney2019}, we prove a weaker version of this characterization:

\begin{theorem} \label{quasimobius}
Let $X,Y$ be proper cocompact \CAT spaces with at least 3 points in their sublinear boundaries. A homeomorphism $f:\partial_\kappa X \to \partial_\kappa Y$ is induced by a quasi-isometry $h:X \to Y$ if and only if $f$ is stable and successively quasi-m{\"o}bius.
\end{theorem}

The following is an immediate consequence.

\begin{corollary}
Let $G$ and $H$ be \CAT groups. Then $G$ is quasi-isometric to $H$
if and only if there exists a homeomorphism $f : \partial_\kappa G \rightarrow \partial_\kappa H$ which is successively quasi-m{\"o}bius and stable.
\end{corollary}

\subsection{Contracting geodesic rays and quasi-m{\"o}bius maps}\label{morsedefinitions} We remind the reader of a few terminologies from \cite{RuthDevin} and \cite{Charney2019}. Recall that a geodesic $\gamma$ is \emph{strongly contracting} if it is in the sublinearly Morse boundary whose associated sublinear function $\kappa = 1$. This implies the existence of a constant $\sD$ such that all disjoint balls project onto $\gamma$ to a set of diameter at most $\sD$, in which case we say $\gamma$ is \emph{$\sD$-strongly contracting}. Consider the set of all $\sD$-strongly-contracting geodesic rays emanating from $\go.$ We can think of this set as a subspace of the various boundaries we study in this paper: we use $\partial^{\sD}_{v}X$  to denote the set of all $\sD$-contracting geodesic rays emanating from $\go$ when equipped with the subspace topology of the visual boundary , and use $\partial^{\sD}_{\kappa}X$ when equipped with the subspace topology of the $\kappa$-boundary.


 Denote by $\partial^{(n,\sD)}_{\kappa}X$ the collection of all $n$-tuples $(a_1,a_2,...,a_n)$ of distinct points $a_i \in \partial_{\kappa} X $ such that every bi-infinite geodesic connecting $a_i$ to $a_j$ is $\sD$-strongly contracting.

\begin{definition}
Let $X,Y$ be proper geodesic \CAT space.

\begin{itemize}
    \item  A map $f: \partial_{\kappa} X
\rightarrow \partial_{\kappa} Y$ is said to be \emph{$1$-stable} if for every $\sD$, there exists $\sD'$ such that $f(\partial^{\sD}_{\kappa}X)\subseteq \partial^{\sD'}_{\kappa}Y.$

\item A map $f: \partial_{\kappa} X \to \partial_{\kappa} Y$ is said to be \emph{$2$-stable} if for every $\sD$, there exists $\sD'$ such that \[f(\partial^{(2,\sD)}_{\kappa}X)\subseteq \partial^{(2,\sD')}_{\kappa}Y.\] 
\end{itemize}
\end{definition}

Notice that it follows from the above definition that a 2-stable map $f$ maps $\partial^{(n,\sD)}_{\kappa}X$ to $\partial^{(n,\sD')}_{\kappa}X$ for all $n \geq 2$. Hence, it makes sense to make the following definition.

A map $f: \partial_{\kappa} X \to \partial_{\kappa} Y$ is said to be \emph{stable} if it is both 1 and 2 stable.

\begin{definition}
 The cross-ratio of a four-tuple $(a, b, c, d) \in     \partial^{(4,\sD)}_{\kappa} X$ is defined to be $[a,b,c,d]=$ $\ou$ $\underset{\alpha \in (a,c)}{\sup}d(\pi_{\alpha}(b), \pi_{\alpha}(d))$, where the sign is positive if the orientation of the geodesic $(\pi_{\alpha}(b), \pi_{\alpha}(d))$ agrees with that
of $(a, c)$ and is negative otherwise.

A stable map $f:\partial_{\kappa}X \rightarrow \partial _{\kappa}Y$ is said to be \emph{successively quasi-m{\"o}bius} if for every $\sD,$ there exists a continuous map $\psi_{\sD}:[0, \infty) \rightarrow [0, \infty)$ such that for all 4-tuples $(a, b, c, d) \in     \partial^{(4,\sD)}_{\kappa} X$, we have $[f(a), f(b), f(c), f(d)] \leq \psi_{\sD}(|[a,b,c,d]|)$.

Let  $X_1 \subset X_2 \subset X_3 \subset...$
 be a nested sequence of topological spaces. The \emph{direct limit} of $\{X_i\}$, denoted by $\varinjlim X_i$, is the space consisting of the union of all $X_i$ given the following topology: A subset $U$ is open in $\varinjlim X_i$ if $U \cap X_i$ is open in $X_i$ for each $i.$ 
 
 \end{definition}
The following is a standard way to establish continuous maps between two nested sequences and the proof is left as an exercise for interested readers.


\begin{lemma} \label{lem: continuous maps between direct limits}
Let $\{X_i\}$, and $\{Y_j\}$ be two sequences of nested topological spaces. Let 
\[X =  \varinjlim X_i \text{ and } Y = \varinjlim Y_i \] 
be the direct limit of $\{ X_i \}$ and $ \{ Y_i \}$ respectively. 
If $f:X\to Y$ is a map such that: \begin{itemize}
    \item For each $i$ there exists some $j$ with $f(X_i) \subseteq Y_j$.
    \item $f|_{X_i}:X_i \to Y_j$ is continuous.
\end{itemize}

Then $f$ is continuous.
\end{lemma}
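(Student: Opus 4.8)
The plan is to verify continuity of $f$ directly from the definition of the direct limit topology, thereby reducing the global statement to the continuity hypotheses on each piece $X_i$. Recall that $f$ is continuous precisely when the preimage of every open subset of $Y$ is open in $X$, and that by the definition of $\varinjlim X_i$ a subset $V \subseteq X$ is open if and only if $V \cap X_i$ is open in $X_i$ for every $i$. Combining these two facts, it suffices to fix an arbitrary open set $U \subseteq Y$ and to show that $f^{-1}(U) \cap X_i$ is open in $X_i$ for each index $i$.

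First I would fix $i$ and use the first hypothesis to choose an index $j = j(i)$ with $f(X_i) \subseteq Y_j$. The key identity is then
\[
f^{-1}(U) \cap X_i = (f|_{X_i})^{-1}(U \cap Y_j).
\]
Indeed, a point $x \in X_i$ lies in $f^{-1}(U)$ if and only if $f(x) \in U$, and since $f(x) \in Y_j$ this is equivalent to $f(x) \in U \cap Y_j$; this is exactly the assertion that $x \in (f|_{X_i})^{-1}(U \cap Y_j)$.

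Next I would use that $U$ is open in $Y = \varinjlim Y_j$, so that by the defining property of the direct limit topology the intersection $U \cap Y_j$ is open in $Y_j$. By the second hypothesis the restriction $f|_{X_i} \from X_i \to Y_j$ is continuous, and therefore $(f|_{X_i})^{-1}(U \cap Y_j)$ is open in $X_i$. Via the identity above this shows that $f^{-1}(U) \cap X_i$ is open in $X_i$. Since $i$ was arbitrary, the defining property of the direct limit topology on $X$ yields that $f^{-1}(U)$ is open in $X$, so $f$ is continuous.

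Since the whole argument is a direct unwinding of the definitions, I do not expect any serious obstacle. The only points requiring care are purely bookkeeping: keeping track of the fact that the index $j$ depends on $i$, and noting that restricting the codomain of $f|_{X_i}$ from $Y$ down to $Y_j$ loses no information, because the containment $f(X_i) \subseteq Y_j$ forces membership in $U$ to be detected already at the level of $U \cap Y_j$. This last observation is precisely what makes the reduction to the pieces valid.
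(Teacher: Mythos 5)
Your proof is correct. The paper does not actually supply an argument here — it explicitly leaves the lemma ``as an exercise for interested readers'' — and your direct unwinding of the direct limit topology (reducing openness of $f^{-1}(U)$ to openness of each $f^{-1}(U)\cap X_i = (f|_{X_i})^{-1}(U\cap Y_{j(i)})$) is exactly the standard argument the authors have in mind. The one point of care you flag, that membership of $f(x)$ in $U$ is detected already in $U\cap Y_j$ because $f(X_i)\subseteq Y_j$, is indeed the only nontrivial observation, and you handle it correctly.
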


Consider the topological spaces $\partial^{\sD}_{v}X$. The \emph{Morse boundary} $\partial_{\star} X$ is the direct limit of the topological spaces $\partial^{\sD}_{v} X$ where $\sD \in \mathbb{N}.$ In other words  $$\partial_{\star} X=\varinjlim \partial^{\sD}_{v} X$$ Hence, a set $U$ is open in  $\partial_{\star} X$ if and only if $U \cap \partial^{\sD}_{v}X$ is open for each $\sD$. We will often make  use of the following theorem.

\begin{theorem}[\cite{Charney2019}] \label{ruthquasimobius}
Let $X,Y$ be proper cocompact \CAT spaces with at least 3 points in their Morse boundaries. A homeomorphism $f:\partial_{\star} X \to \partial_{\star} Y$ is induced by a quasi-isometry $h:X \to Y$ if and only if $f$ is is 2-stable and successively quasi-m{\"o}bius.
\end{theorem}

%
%

\begin{lemma} \label{stable} 
A quasi-isometry $h:X \to Y$ induces a stable homeomorphism $\partial_{\kappa} h: \partial_\kappa X \to \partial_\kappa Y.$
\end{lemma}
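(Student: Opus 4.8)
The plan is to unpack the two things the statement asserts: first that $h$ induces a well-defined homeomorphism $\partial_\kappa h$, and second — the substantive part — that this homeomorphism is \emph{stable}, i.e.\ both $1$-stable and $2$-stable. The first is exactly the quasi-isometry invariance of the sublinearly Morse boundary recorded in the cited theorem of \cite{QR19}: a $(\kk,\sK)$-quasi-isometry sends a $\kappa$-Morse quasi-geodesic $\beta$ to the quasi-geodesic $h\circ\beta$ (made continuous and with controlled constants as in Section~2), this operation respects $\kappa$-fellow-travelling, and the induced boundary map is a homeomorphism with inverse induced by a quasi-inverse of $h$. I would take this for granted and concentrate entirely on stability. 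The single engine behind both stability assertions is the quantitative fact that \emph{strong contraction is preserved by quasi-isometries with control depending only on $(\kk,\sK)$}. I would phrase this via the Morse characterisation: by Theorem~\ref{Thm:Strong} applied with $\kappa=1$, a geodesic is $\sD$-strongly contracting if and only if it is $1$-Morse with a gauge $\mathfrak m_\sD$ determined by $\sD$, and conversely, so the passage between the constant $\sD$ and the $1$-Morse gauge is quantitative in both directions.

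It therefore suffices to show that $h$ turns a $1$-Morse (quasi-)geodesic with gauge $\mathfrak m$ into a $1$-Morse quasi-geodesic with gauge $\mathfrak m'$ depending only on $\mathfrak m$ and $(\kk,\sK)$. This is the standard pull-back argument: given a quasi-geodesic $\eta$ in $Y$ whose endpoints are sublinearly close to $h\circ\alpha_0$, its image under a quasi-inverse is a quasi-geodesic in $X$ whose endpoints are close to $\alpha_0$; the Morse property of $\alpha_0$ confines it to a controlled neighbourhood of $\alpha_0$, and applying $h$ confines $\eta$ to a controlled neighbourhood of $h\circ\alpha_0$. Tracking constants yields $\mathfrak m'$. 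With this, $1$-stability follows quickly: given $\bfa\in\partial^{\sD}_\kappa X$ with unique geodesic representative $\alpha_0$, the quasi-geodesic $h\circ\alpha_0$ represents $\partial_\kappa h(\bfa)$ and is $1$-Morse with gauge depending only on $\sD$ and $(\kk,\sK)$; its unique geodesic representative (Lemma~\ref{uniquegeodesic}) $\kappa$-fellow-travels it, hence is itself $1$-Morse with comparable gauge, and converting back through Theorem~\ref{Thm:Strong} makes it $\sD'$-strongly contracting for some $\sD'=\sD'(\sD,\kk,\sK)$. Thus $\partial_\kappa h(\partial^{\sD}_\kappa X)\subseteq\partial^{\sD'}_\kappa Y$.

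For $2$-stability I would argue analogously on bi-infinite geodesics. Let $(a_1,a_2)\in\partial^{(2,\sD)}_\kappa X$, so every bi-infinite geodesic joining $a_1$ to $a_2$ is $\sD$-strongly contracting; fix one such $\gamma$. Then $h\circ\gamma$ is a $1$-Morse bi-infinite quasi-geodesic from $f(a_1)$ to $f(a_2)$ with controlled gauge. Now let $\gamma'$ be \emph{any} bi-infinite geodesic in $Y$ joining $f(a_1)$ to $f(a_2)$. Since $\gamma'$ and $h\circ\gamma$ share both endpoints at infinity and $h\circ\gamma$ is Morse, they uniformly fellow-travel, so $\gamma'$ lies in a controlled neighbourhood of $h\circ\gamma$ and is therefore $1$-Morse with comparable gauge, i.e.\ $\sD''$-strongly contracting with $\sD''$ depending only on $\sD,\kk,\sK$. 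Hence $(f(a_1),f(a_2))\in\partial^{(2,\sD'')}_\kappa Y$, giving $2$-stability; combining the two yields stability.

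The step I expect to be the main obstacle is the ``same endpoints force uniform fellow-travelling'' claim used for $2$-stability: one must show that a bi-infinite geodesic $\gamma'$ and the Morse bi-infinite quasi-geodesic $h\circ\gamma$ sharing both boundary points stay uniformly close, with the bound depending only on the Morse data and not on the individual geodesics. This requires applying the strong-Morse property of Theorem~\ref{Thm:Strong} at each of the two ends and then patching the resulting control across the middle of the line, so the two-ended bookkeeping — while keeping every constant a function of $(\sD,\kk,\sK)$ alone — is where the care is needed. A secondary nuisance is the repeated conversion between the strong-contraction constant and the $1$-Morse gauge, which must remain quantitative throughout the argument.
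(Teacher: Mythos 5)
Your proposal is correct in substance, but it takes a more self-contained route than the paper, whose proof is essentially two citations. For $1$-stability the paper quotes Theorem 5.1 of \cite{QR19} directly: the unique geodesic representative of the image of a $\sD$-strongly contracting ray under a $(\kk,\sK)$-quasi-isometry is $\sD'$-strongly contracting with $\sD'$ depending only on $\sD,\kk,\sK$. For $2$-stability it quotes Theorem~\ref{ruthquasimobius}, whose forward direction says the map a quasi-isometry induces on the Morse boundary is $2$-stable; since the tuples in $\partial^{(2,\sD)}_{\kappa}X$ are exactly the tuples appearing in the Morse-boundary statement, this transfers verbatim. You instead reprove both ingredients: your pull-back argument for preservation of the $1$-Morse gauge is precisely the content of the cited Theorem 5.1, and your two-ended fellow-travelling argument for bi-infinite geodesics is the standard proof of the cited $2$-stability. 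Your version buys independence from the external references at the cost of the bookkeeping you yourself flag as delicate; the paper's buys brevity. One imprecision to repair if you write yours out in full: the unique geodesic representative of $[h\circ\alpha_0]$ being ``$\kappa$-fellow-travelling'' $h\circ\alpha_0$ is not by itself enough to give it a comparable $1$-Morse gauge, since sublinear closeness does not bound the gauge; you must first invoke the $1$-Morse property of $h\circ\alpha_0$ (Theorem~\ref{Thm:Strong} with the sublinear function equal to $1$) to conclude the asymptotic geodesic lies at \emph{bounded} Hausdorff distance from it, and only then deduce the gauge. With that adjustment the argument goes through.
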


\begin{proof}
Fix $\go \in X$ and let $\go'=h(\go)$ where $h$ is a $(\kk,\sK)$-quasi-isometry. Qing and Rafi show that a quasi-isometry $h$ induces a homeomorphism $\partial h$ on their respective $\kappa$-boundaries. If $\gamma$ is a $\sD$-strongly contracting geodesic ray, then by \cite{ChSu2014}, the unique geodesic ray starting at $h(\go)$ and representing $[f(\gamma)]$ must be $\sD'$-strongly contracting where $\sD'$  depends on $\sD, \kk$ and $\sK$. This implies that $\partial_{\kappa} h$ is 1-stable. Now, Theorem \ref{ruthquasimobius} gives us that the map induced by $h$ on the Morse boundary is 2-stable. Hence, we deduce that $\partial_{\kappa} h$ is stable.

%

\end{proof}

\begin{lemma}\label{stable sublinear homeomorphisms imply contracting ones}
Any homeomorphism $f: \partial_{\kappa} X \to \partial_{\kappa} Y$ such that $f, f^{-1}$ are 1-stable induces a homeomorphism $g:\partial_{\star} X \to \partial_{\star} Y$ on their Morse boundaries, with $g(x)=f(x)$ for all $x \in \partial_{\star}X.$
\end{lemma}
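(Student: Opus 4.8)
The plan is to realize $g$ as the set-theoretic restriction of $f$ to the strongly contracting points, and then to verify that it is a homeomorphism by checking continuity in each direction with the direct-limit criterion (Lemma~\ref{lem: continuous maps between direct limits}) together with the topological identification of Lemma~\ref{key to quasi-mobius maps}.

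First I would observe that $1$-stability forces $f$ to preserve the strongly contracting locus. Write $S_X \subseteq \partial_\kappa X$ for the set of strongly contracting rays; this is the common underlying set of all the spaces $\partial^{\sD}_v X$ and $\partial^{\sD}_\kappa X$, and $\partial_\star X$ is $S_X$ equipped with the direct-limit topology. The hypothesis that $f$ is $1$-stable says that for each $\sD$ there is some $\sD'$ with $f(\partial^{\sD}_\kappa X) \subseteq \partial^{\sD'}_\kappa Y$; taking the union over $\sD$ gives $f(S_X) \subseteq S_Y$, and symmetrically the $1$-stability of $f^{-1}$ gives $f^{-1}(S_Y) \subseteq S_X$. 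Since $f$ is a bijection, these two inclusions force $f(S_X) = S_Y$. I then define $g \from \partial_\star X \to \partial_\star Y$ by $g(x) = f(x)$, which is a set-theoretic bijection; it remains only to check that $g$ and $g^{-1}$ are continuous for the direct-limit topologies.

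To prove $g$ is continuous I would apply Lemma~\ref{lem: continuous maps between direct limits} to the presentations $\partial_\star X = \varinjlim \partial^{\sD}_v X$ and $\partial_\star Y = \varinjlim \partial^{\sD}_v Y$. The level condition is exactly $1$-stability: for each $\sD$ choose $\sD'$ with $f(\partial^{\sD}_\kappa X) \subseteq \partial^{\sD'}_\kappa Y$, so that $g(\partial^{\sD}_v X) \subseteq \partial^{\sD'}_v Y$, these subspaces having the same underlying sets as their $\kappa$-counterparts. For continuity on each level I factor $g|_{\partial^{\sD}_v X}$ as the composition
\[
\partial^{\sD}_v X \xrightarrow{\ id\ } \partial^{\sD}_\kappa X \xrightarrow{\ f\ } \partial^{\sD'}_\kappa Y \xrightarrow{\ id\ } \partial^{\sD'}_v Y,
\]
where the two identity maps are homeomorphisms by Lemma~\ref{key to quasi-mobius maps} and the middle map is continuous because it is the corestriction of the continuous map $f$ to the subspace $\partial^{\sD}_\kappa X$, whose image lands in the subspace $\partial^{\sD'}_\kappa Y$. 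Hence each $g|_{\partial^{\sD}_v X}$ is continuous, and Lemma~\ref{lem: continuous maps between direct limits} yields continuity of $g$. Running the identical argument with $f^{-1}$ (using that $f^{-1}$ is $1$-stable) shows that $g^{-1}$ is continuous, so $g$ is a homeomorphism extending $f$ on $\partial_\star X$, as required.

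The argument is essentially a bookkeeping exercise, so there is no single hard computational step; the only point requiring genuine care is keeping the two topologies on the common set $S_X$ straight. The hard part will be remembering to invoke Lemma~\ref{key to quasi-mobius maps} at each finite level to pass between the visual topology, in which $\partial_\star$ is built as a direct limit, and the $\kappa$-topology, in which $f$ is a priori only known to be continuous; without this identification the restrictions of $f$ to the $\partial^{\sD}_v X$ would not be manifestly continuous.
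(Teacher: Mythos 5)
Your proposal is correct and follows essentially the same route as the paper's proof: both identify $g$ as the restriction of $f$ to the strongly contracting locus, use $1$-stability of $f$ and $f^{-1}$ to see this restriction is a well-defined bijection landing in the right levels, pass between the visual and $\kappa$-topologies on each $\partial^{\sD}$ via Lemma~\ref{key to quasi-mobius maps}, and assemble continuity of $g$ and $g^{-1}$ with the direct-limit criterion of Lemma~\ref{lem: continuous maps between direct limits}. The only cosmetic difference is that the paper routes through the intermediate space $\partial_1 X$ with its subspace topology (citing Theorem E of \cite{QR19}), whereas you factor each level map directly; this changes nothing of substance.
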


\begin{proof}
Let $f: \partial_{\kappa} X \to \partial_{\kappa} Y$ be a homeomorphism such that $f$ and $f^{-1}$ are 1-stable. Notice that by Theorem E in \cite{QR19}, we have that if $\kappa'<\kappa$, then the inclusion map \[i: \partial_{\kappa'}X \to \partial_{\kappa}X\] is continuous. Taking $\kappa'=1$, yields that $i:\partial_{1} X\to \partial_{\kappa}X$ is continuous. Hence, since both $f$ and $f^{-1}$ are 1-stable, the restriction of $f$ to $\partial_1 X$ induces a homeomorphism $\overline{f}:\partial_1 X \to \partial_1 Y$, with $\overline{f}=f|_{\partial_1 X }$ where $\partial_1 X$ and $\partial_1 Y$ are given the subspace topology of the $\kappa$-boundary. Meanwhile,
 \[ \partial_1 X=
\bigcup_{\sD=1}^{\infty} \partial^{\sD}_1 X  \text{ and } \partial_1 Y=
\bigcup_{\sD=1}^{\infty} \partial^{\sD}_1 Y .\] Since $\partial^{\sD}_1 X$ is equipped with the subspace topology of $\partial_1 X$, the inclusion map \[i^{\sD}: \partial^{\sD}_1 X \hookrightarrow \partial_1 X\] is continuous. Using Lemma \ref{key to quasi-mobius maps}, we get that \[i^{\sD}: \partial^{\sD}_v X\hookrightarrow{} \partial_1 X \] is continuous for every $\sD$, where $\partial^{\sD}_v X$ is given the subspace topology of the visual boundary. Furthermore, since $f$ is 1-stable, we have $\overline{f} \circ i^{\sD}:\partial^{\sD}_v X\hookrightarrow \partial^{\sD'}_1 Y$ for some $\sD'$ where $\overline{f} \circ i^{\sD}$ is continuous. Using Lemma \ref{key to quasi-mobius maps}, we obtain a continuous map \[\overline{f} \circ i^{\sD}:\partial^{\sD}_v X\hookrightarrow \partial^{\sD'}_v Y\] for each $\sD$. Hence, by Lemma \ref{lem: continuous maps between direct limits}, we get a continuous map $g:\partial_{\star} X \to \partial_{\star}Y$. Applying the same argument above to $f^{-1}$ yields a continuous map $g':\partial_{\star} Y \to \partial_{\star}X$ with 
\[g \circ g'= id_{\partial_{\star} X} \text{ and }g' \circ g=id_{\partial_{\star} Y}, \] which finishes the proof.

\end{proof}

%
%
%

\subsection{Proof of Theorem~\ref{quasimobius}}\label{proof of quasi}

\begin{proof}
($\Rightarrow$) If $h$ is a quasi-isometry, then $f: = \partial h$ is stable by Lemma \ref{stable} . Also, $f$ is successively quasi-m{\"o}bius by Theorem~\ref{ruthquasimobius}.

($\Leftarrow$) Using Lemma \ref{stable sublinear homeomorphisms imply contracting ones} any stable homeomorphism  $f:\partial_\kappa X \to \partial_\kappa Y$ induces a homeomorphism $g:\partial_{\star} X \to \partial_{\star} Y$ on their Morse boundaries, with $g(x)=f(x)$ for all $x \in \partial_{\star}X$. Since $f$ is successively quasi-m{\"o}bius and $g(x)=f(x)$ for $x \in \partial_{\star}X,$ Theorem \ref{ruthquasimobius} implies the existence of a quasi-isometry $h:X \to Y$ such that $\partial h=g:\partial_{\star}X \to \partial_{\star} Y$. We wish to show that the induced map \[\partial _{\kappa}h:\partial_{\kappa} X \to \partial_{\kappa} Y \] agrees with $f.$ Notice that as a set $\partial_{\star}X=\partial_1 X,$ where $\partial_1 X$ is the subset of $\partial_\kappa X$ consisting of equivalence classes having a strongly contracting representative. Hence, we have $\partial_{\kappa}h(x)=\partial h(x)$ for all $x \in \partial_1 X \subseteq \partial_{\kappa} X$. Now, since $\partial h=g$, and $g(x)=f(x)$ on $\partial_1 X$, we get that \[\partial _{\kappa}h(x)=\partial h(x)=g(x)=f(x) \] for all $x \in \partial_1 X.$ Therefore, $\partial_{\kappa}h(x)=f(x)$ for all $x \in \partial_1 X \subseteq \partial_{\kappa} X$. It remains to show that $\partial _{\kappa}h(x')=f(x')$ for all $x' \in \partial_{\kappa} X.$ Let $x' \in \partial_{\kappa} X$, by Corollary \ref{densesubset}, there exists a sequence $x_n \in \partial_1 X$ that converges to $x'$
\[ x_n \to x' \] in $\partial_{\kappa} X.$ Since $f$ is continuous on $\partial X_{\kappa}$, we have convergence \[
  f(x_n)=\partial _{\kappa}h(x_n) \to  f(x').\] Also, since $\partial _{\kappa}h$ is continuous on $\partial_{\kappa} X,$ we get that 
\[ \partial _{\kappa}h (x_n) \to \partial _{\kappa}h (x'). \] As $\partial_{\kappa} Y $ is Hausdorff, we obtain $\partial _{\kappa}h(x')=f(x').$
\end{proof}

This result is far from satisfying, since successively quasi-m{\"o}bius requires one to check the quasi-m{\"o}bius condition for every $\sD$, it is a much stronger condition than quasi-m{\"o}bius. Currently there is no results directly characterizing quasi-m{\"o}bius maps on the $\kappa$-boundaries.

\bibliography{bibliography1}{}
\bibliographystyle{alpha}

\end{document}